\documentclass{article}
\usepackage{amsmath, amssymb, graphics}

\usepackage{amsthm}
\usepackage[dvipdfmx]{graphicx}
\usepackage{color}
\usepackage{natbib}
\usepackage{amscd} 
\usepackage{tikz-cd}
\usepackage{here}
\usepackage{pb-diagram}

\usepackage{threeparttable}
\usepackage{caption}
\captionsetup[table]{labelsep=space, justification=centering}
\captionsetup[figure]{labelsep=space, justification=centering}

\newtheoremstyle{mystyle}
    {}
    {}
    {\itshape}
    {}
    {\bfseries}
    {}
    { }
    {}
  \theoremstyle{mystyle}

\newtheorem{thm}{Theorem}[section]
\newtheorem{df}{Definition}[section]

\newtheorem{prp}{Proposition}[section]
\newtheorem{lem}{Lemma}[section]
\newtheorem{cor}{Corollary}[section]

\newcommand{\mathsym}[1]{{}}
\newcommand{\unicode}[1]{{}}

\makeatletter
    
    \@addtoreset{equation}{section}
  \makeatother

\setcounter{section}{-1}

\begin{document}
\title{Construction of smooth rhythms through a monotone invariant measure}
\author{Fumio HAZAMA\\Tokyo Denki University\\Hatoyama, Hiki-Gun, Saitama JAPAN\\
e-mail address:hazama@mail.dendai.ac.jp}
\date{\today}
\maketitle
\thispagestyle{empty}

\begin{abstract}
\noindent
The present article introduces the notion of quasi-smoothness of marked rhythms through a certain transformation $Ref$, called reformation map. A marked rhythm consists of a rhythm together with a marker, and the map $Ref$ modifies the marked onset of the rhythm. It is shown that an iteration of the map $Ref$ transforms an arbitrary marked rhythm into a quasi-smooth one. A numerical criterion for a marked rhythm to be quasi-smooth is given in terms of the difference of its rhythm part. Through this criterion, the rhythm part of any quasi-smooth marked rhythm is shown to be smooth.
\end{abstract}

\noindent
$\mathbf{Keywords}$: marked rhythm, reformation map, deformation map, quasi-smoothness, finite dynamical system, monotone invariant measure.\\

\section{Introduction}
The main purpose of this paper is to propose a new method of construction of smooth rhythms. The notion of smoothness of a rhythm is introduced in (Hazama 2022), and is shown to give us a larger category of rhythms than that of {\it maximally even} ones, which have been widely studied (Clough et a. 2000; Demaine 2009; Toussaint 2013). In (Hazama 2022), a self-map $Rav$ on the space $\mathbf{R}_N^n$ of rhythms of $N$ beats with $n$ onsets is introduced, and a rhythm is defined to be smooth if it is periodic under $Rav$. One of the main results of (Hazama 2022) gives us a numerical criterion for a rhythm to be smooth in terms of its width. The proof of the validity of the criterion, given there, is quite involved however, and the author of the paper expresses his hope about a possible simplification of its original proof. Our main result realizes his hope by introducing a self-map $Ref$, called {\it \underline{ref}ormation map}, on the space $m\mathbf{R}_N^n=\mathbf{Z}_n\times \mathbf{R}_N^n$ of marked rhythms. The first coordinate $k$ of a marked rhythm $(k,\mbox{\boldmath $a$})$ indicates which entry of the rhythm $\mbox{\boldmath $a$}$ is modified by $Ref$. Since $Ref$ does not change any other entries, the dynamical behavior of $Ref$ will be seen to be much simpler than that of $Rav$, which changes all the entries at the same time. Our main result shows that, if a marked rhythm is periodic under $Ref$, then its rhythm part is smooth in the sense of (Hazama 2022). 

The plan of this paper is as follows. Section one introduces our main tool {\it Ref} on $m\mathbf{R}_N^n$ for our study. The map leads us to a finite dynamical system $(m\mathbf{R}_N^n, Ref)$, which we denote by $\mathbf{Ref}_N^n$. The notion of {\it quasi-smoothness} of a marked rhythm is defined by the property that it is periodic under $Ref$. We investigate the dynamics of $Ref$ through its descent to a quotient dynamical system $\mathbf{Def}_N^n=(m\mathbf{D}_N^n,Def)$ by an automorphism of $\mathbf{Ref}_N^n$. The compatibility of $Ref$ and $Def$ is proved at the end of this section. Section two axiomatizes a crucial point which emerges in our treatment of the dynamical system $\mathbf{Ref}_N^n$ together with its quotient system $\mathbf{Def}_N^n$. We introduce the notion of ``{\it monotone invariant measure}'' on an arbitrary finite dynamical system $\mathbf{F}=(X,F)$ with an automorphism $\varphi$. It is shown to descend to the quotient dynamical system $\overline{\mathbf{F}}=(\overline{X},\overline{F})$ by the action of $\varphi$. The measure enables one to relate the periodic behavior of the self-map $F$ on $X$ with that of the self-map $\overline{F}$ on $\overline{X}$. Section three constructs a monotone invariant measure $\mu_{m\mathbf{R}}$ on $\mathbf{Ref}_N^n$. This is modeled after one of the measures proposed and investigated in (Rota 2001). A general result in the previous section allows $\mu_m{\mathbf{R}}$ to descend to a measure $\mu_{m\mathbf{D}}$ on $\mathbf{Def}_N^n$, and enables us to relate the periodicity of $Def$ and that of $Ref$. Section four introduces the notion of $\mu_{m\mathbf{D}}$-{\it stability}, which is shown to be a necessary condition for the periodicity. The final step toward our goal is provided by the fact that $\mu_{m\mathbf{D}}$-stability of $\mbox{\boldmath $D$}\in m\mathbf{D}_N^n$ implies that the width of $\mbox{\boldmath $D$}$ is less than or equal to 1. Thus we arrive at our main theorem which shows the equivalence of the quasi-smoothness and the smoothness.

\section{Reformation map for marked rhythms}
In this section we fix some notation and terminology, and introduce our main subject, the {\it reformation map}.

\subsection{Basic setup}
For any integer $m\geq 3$, let $\mathbf{Z}_m=\{0,1,\cdots, m-1\}$,  and let $R_m:\mathbf{Z}\rightarrow \mathbf{Z}_m$ denote the function which maps an integer to its least nonnegative remainder modulo $m$. The usual addition and subtraction operations on $\mathbf{Z}/m\mathbf{Z}$ are translated to the ones on the set $\mathbf{Z}_m$, which we denote by ``$+_m$" and ``$-_m$" respectively. Namely we define
\begin{eqnarray*}
a+_mb&=&R_m(a+b),\\
a-_mb&=&R_m(a-b),
\end{eqnarray*}
for any pair $a,b\in\mathbf{Z}_m$. For any pair $a,b$ of distinct elements in $\mathbf{Z}_m$, we define the {\it close interval} by $[a,b]_m=\{a,a+_m1,\cdots,b\}\subset \mathbf{Z}_m$. \\

Let $N$ be an integer $\geq 3$, and let $n$ be an integer with $3\leq n\leq N$. Throughout the paper the integers $N,n$ are assumed to satisfy these conditions. An $n$-tuple $\mbox{\boldmath $a$}=(a_0,a_1,\cdots, a_{n-1})\in\mathbf{Z}_N^n$ of mutually distinct elements of $\mathbf{Z}_N$ is a {\it rhythm} of length $N$ with $n$ onsets, if it satisfies the condition
\begin{eqnarray}
\sum_{k=0}^{n-1}(a_{k}-_Na_{k-_n1})=N.
\end{eqnarray}
Here the summation on the left hand side means the usual addition on $\mathbf{Z}$. We denote by $\mathbf{R}_N^n$ the set of rhythms of length $N$ with $n$ onsets. A {\it marked rhythm} is the pair $(k,\mbox{\boldmath $a$})$ of an element of $k\in \mathbf{Z}_n$, called the {\it marker}, and a rhythm $\mbox{\boldmath $a$}\in\mathbf{R}_N^n$, called the {\it rhythm part}. The $k$-th coordinate $a_k$ is called the {\it marked entry} of the marked rhythm $(k,\mbox{\boldmath $a$})$. The set of marked rhythm is denoted by $m\mathbf{R}_N^n$:
\begin{eqnarray*}
m\mathbf{R}_N^n=\mathbf{Z}_n\times \mathbf{R}_N^n.
\end{eqnarray*}
Our main ingredient $Ref$ of this papr will be defined as a self-map on $m\mathbf{R}_N^n$. \\

For our investigation of rhythms and marked rhythms, it is indispensable to consider their differences, whose effectiveness is also fully illustrated in the comprehensive book (Toussaint 2013). For any rhythm $\mbox{\boldmath $a$}$, its {\it difference} $d(\mbox{\boldmath $a$})$ is defined by
\begin{eqnarray*}
d(\mbox{\boldmath $a$})=(a_0-_Na_{n-1},a_1-_Na_0,\cdots,a_{n-1}-_Na_{n-2})
\end{eqnarray*}
Since a rhythm consists of mutually distinct elements of $\mathbf{Z}_N$, it follows from (1.1) that the difference belongs to the set
\begin{eqnarray*}
\mathbf{D}_N^n=\{(d_0,d_1,\cdots,d_{n-1})\in\mathbf{Z}_N^*|\sum_{k=0}^{n-1}d_k=N\},
\end{eqnarray*}
where $\mathbf{Z}_N^*=\mathbf{Z}_N\setminus \{0\}$. For any marked rhythm $(k,\mbox{\boldmath $a$})\in m\mathbf{R}_N^n$, the {\it difference} $\Delta(k,\mbox{\boldmath $a$})$ is defined by
\begin{eqnarray*}
\Delta(k,\mbox{\boldmath $a$})=(k,d(\mbox{\boldmath $a$})).
\end{eqnarray*}
Hence it belongs to
\begin{eqnarray*}
m\mathbf{D}_N^n=\mathbf{Z}_n\times \mathbf{D}_N^n.
\end{eqnarray*}
Its elements are called {\it marked differences}. The first component is called the {\it marker} and the second is the {\it difference part}.\\

Let $tr$ denote the bijective self-map on $\mathbf{R}_N^n$ defined by
\begin{eqnarray}
tr(\mbox{\boldmath $a$})=(a_0+_N1,\cdots,a_{n-1}+_N1)
\end{eqnarray}
for any $\mbox{\boldmath $a$}=(a_0,\cdots,a_{n-1})\in \mathbf{R}_N^n$, and let $\sim_{tr}$ denote the equivalence relation on $\mathbf{R}_N^n$ generated by the bijection $tr$. It is not hard to see that the space $\mathbf{D}_N^n$ is identified with the quotient space $\mathbf{R}_N^n/\sim_{tr}$ of $\mathbf{R}_N^n$ by the equivalence relation $\sim_{tr}$. Similarly, if we define the bijective self-map $mtr$ on $m\mathbf{R}_N^n$ by
\begin{eqnarray}
mtr(k,\mbox{\boldmath $a$})=(k,tr(\mbox{\boldmath $a$})),
\end{eqnarray}
and denote by $\sim_{mtr}$ the equivalence relation generated by the bijection $mtr$, then we see that the space $m\mathbf{D}_N^n$ is identified with the quotient space $m\mathbf{R}_N^n/\sim_{mtr}$ of $m\mathbf{R}_N^n$ by the equivalence relation $\sim_{mtr}$. 
\\

\subsection{Definition of the reformation map $Ref$}
In this section, we define a self map ``{\it Ref}'', called {\it reformation map}, on $m\mathbf{R}_N^n$. It is based on the discrete average $rav:\mathbf{Z}_N\times \mathbf{Z}_N\rightarrow \mathbf{Z}_N$ defined as follows.

\begin{df}
For any pair $(a,b)\in\mathbf{Z}_N^2$, its {\rm discrete average} $rav_N(a,b)\in\mathbf{Z}_N$ is defined by
\begin{eqnarray*}
rav_N(a,b)=a+_N\left\lfloor\frac{b-_Na}{2}\right\rfloor.
\end{eqnarray*}
\end{df}

\noindent
Note that the discrete average is compatible with the addition by one modulo $N$, namely we have
\begin{eqnarray}
rav_N(a+_N1,b+_N1)=rav_N(a,b)+_N1.
\end{eqnarray}
Although this appears to be trivial, the whole our study in this paper is built on this simple equality. We define a self map $ref:\mathbf{R}_N^3\rightarrow \mathbf{R}_N^3$ as follows.

\begin{df}
For any $(a,b,c)\in\mathbf{R}_N^3$, we put
\begin{eqnarray}
ref(a,b,c)=(a,rav_N(a,c),c).
\end{eqnarray}
\end{df}

\noindent
Note that the map $ref$ transforms only the middle entry into the discrete average of the first and the third entries. It follows from (1.4) that
\begin{eqnarray}
ref\circ tr=tr\circ ref.
\end{eqnarray}
\noindent
Based on this map, we define the {\it reformation map}, denoted by $Ref$, on the space $m\mathbf{R}_N^n$ as follows.

\begin{df}
For any $(k,\mbox{\boldmath $a$})\in m\mathbf{R}_N^n$ with $\mbox{\boldmath $a$}=(a_0,a_1,\cdots,a_{n-1})$, we define $Ref(k,\mbox{\boldmath $a$})$ to be $(k+_n1,\mbox{\boldmath $b$})$, where $\mbox{\boldmath $b$}=(b_0,b_1,\cdots,b_{n-1})$ is given in the following way: Let $(c_{k-_n1},c_k,c_{k+_n1})=ref(a_{k-_n1},a_k,a_{k+_n1})$, and define
\begin{eqnarray*}
b_i=
\begin{cases}
c_{k-_n1}, & \mbox{ if }i=k-_n1,\\
c_k, & \mbox{ if }i=k,\\
c_{k+_n1}, & \mbox{ if }i=k+_n1,\\
a_i, & \mbox{ if }i\notin \{k-_n1,k,k+_n1\}\\
\end{cases}
\end{eqnarray*}
\end{df}

We see below through a few examples how the reformation map acts on marked rhythms. In order to visualize marked rhythms, we employ their {\it circle graphs}, whose construction follows. When $\mbox{\boldmath $A$}=(k, \mbox{\boldmath $a$})=(k,(a_0,\cdots, a_{n-1}))$ is an element of $m\mathbf{R}_N^n$, we draw a small disk at $\chi_N(a_j)\in\mathbf{C}$ for any $j\in\mathbf{Z}_n$, where $\chi_N:\mathbf{Z}_N\rightarrow \mathbf{C}$ is defined by $\chi_N(a)=e^{2\pi ia/N}$, and connect the $n$ disks by a polygonal line. The marked entry is enclosed in a circle. The numbers $m=0, 1, \cdots, N-1$, surrounding the unit circle, indicate the spots where the nodes $\chi_N(m)$ locate.
\\\\
\noindent
\textbf{Example 1} For the marked rhythm $\mbox{\boldmath $A$}=(0, (0,1,2))\in m\mathbf{R}_8^3$, the marked entry ``$0$'' is moved by $Ref$ to the discrete average ``$5$'' of ``$2$'' and ``$1$'', and the marker ``$0$'' goes to the next ``$1$'': \\

\begin{figure}[htbp]
\begin{center}
\includegraphics[scale=0.7]{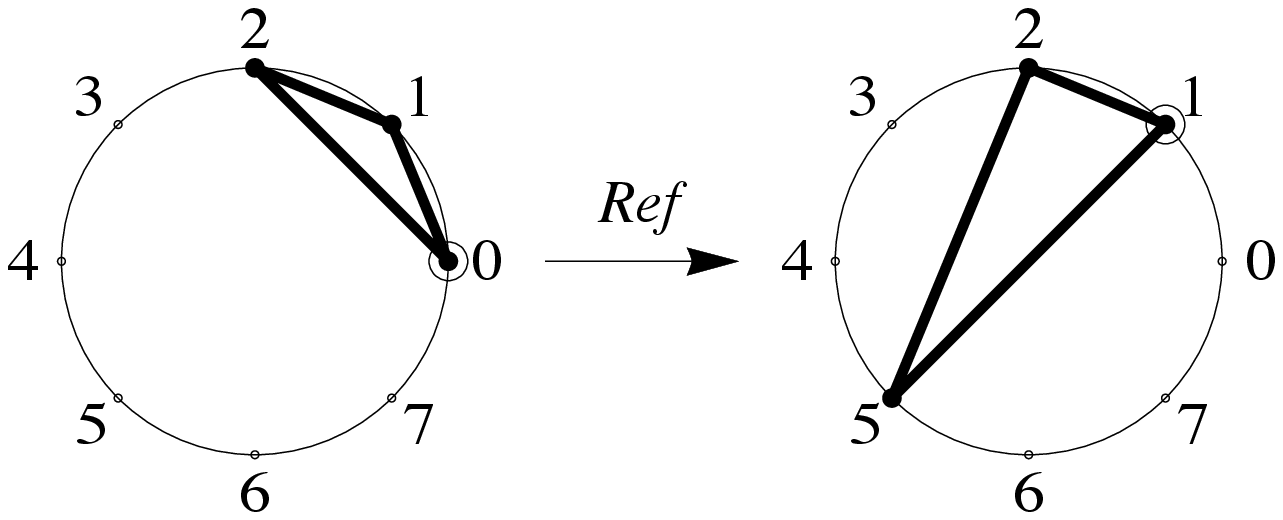}
\caption{$\mbox{\boldmath $A$}=(0, (0,1,2))$ and $Ref(\mbox{\boldmath $A$})=\mbox{\boldmath $A$}^{(1)}=(1, (5,1,2))$}
\end{center}
\end{figure}

\noindent
By iterating the application of $Ref$, the marked rhythm $\mbox{\boldmath $A$}^{(1)}=(1, (5,1,2))$ on the right of Figure 1 is mapped to $\mbox{\boldmath $A$}^{(2)}=(2, (5,7,2))$:

\begin{figure}[htbp]
\begin{center}
\includegraphics[scale=0.7]{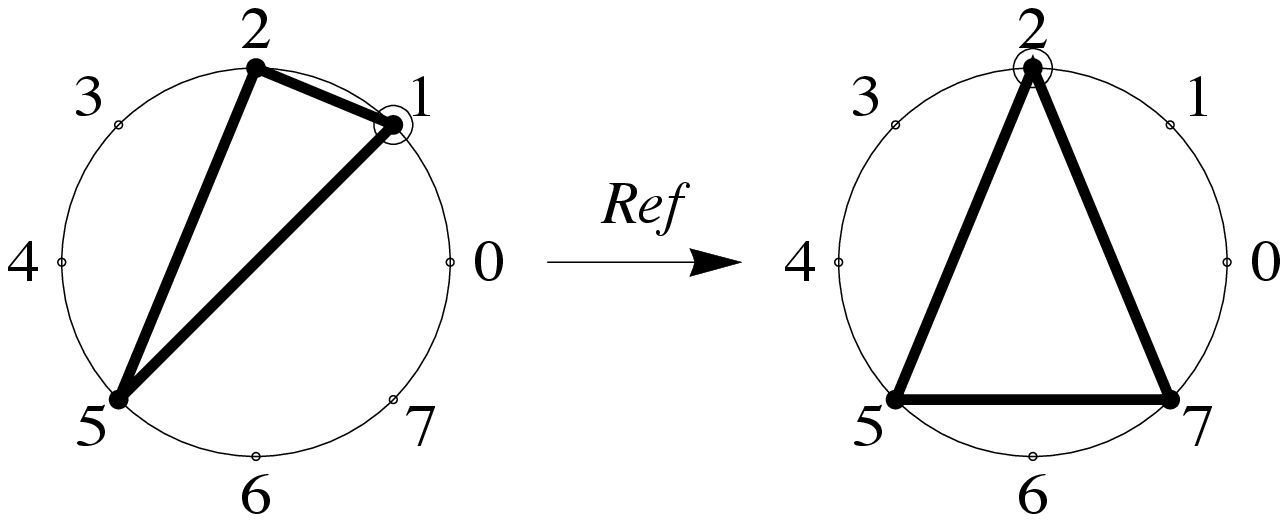}
\caption{$\mbox{\boldmath $A$}^{(1)}=(1, (5,1,2))$ and $Ref(\mbox{\boldmath $A$}^{(1)})=\mbox{\boldmath $A$}^{(2)}=(2, (5,7,2))$}
\end{center}
\end{figure}

\noindent
We see that the rhythm part of the marked rhythm $\mbox{\boldmath $A$}^{(2)}$ is smooth, since its width is equal to one. The main purpose of this paper is to show that this is a general phenomenon. Namely, we can make the rhythm part of an arbitrary marked rhythm smooth by iterated applications of $Ref$.\\\\

The equality (1.6) implies that the compatibility relation
\begin{eqnarray}
Ref\circ mtr=mtr\circ Ref
\end{eqnarray}
holds. Hence the self-map $Ref$ on $m\mathbf{R}_N^n$ descends to $m\mathbf{D}_N^n=m\mathbf{R}_N^n/\sim_{mtr}$. A concrete expression of the descended map is given in the next subsection.

\subsection{Descent of $Ref$ to $m\mathbf{D}_N^n$}
We introduce a self map $Def$, called {\it \underline{def}ormation map}, on $m\mathbf{D}_N^n$, and show that it is compatible with the reformation map $Ref$ through the difference map $\Delta$.

\begin{df}
For any $(k,\mbox{\boldmath $d$})\in m\mathbf{D}_N^n$ with $\mbox{\boldmath $d$}=(d_0,d_1,\cdots,d_{n-1})$, we define $Def(k,\mbox{\boldmath $d$})$ to be $(k+_n1,\mbox{\boldmath $e$})$, where $\mbox{\boldmath $e$}=(e_0,e_1,\cdots,e_{n-1})$ is given in the following way: Let 
\begin{eqnarray*}
(f_k,f_{k+_n1})=(\left\lfloor\frac{d_k+_Nd_{k+_n1}}{2}\right\rfloor,\left\lceil\frac{d_k+_Nd_{k+_n1}}{2}\right\rceil),
\end{eqnarray*}
and define
\begin{eqnarray*}
e_i=
\begin{cases}
f_k, & \mbox{ if }i=k,\\
f_{k+_n1}, & \mbox{ if }i=k+_n1,\\
d_i, & \mbox{ if }i\notin \{k,k+_n1\}\\
\end{cases}
\end{eqnarray*}
\end{df}

\noindent

\begin{prp}
We have the following commutative diagram:
\begin{eqnarray}
  \begin{CD}
     m\mathbf{R}_N^n @>{\Delta}>> m\mathbf{D}_N^n \\
  @V{Ref}VV @V{Def}VV \\
    m \mathbf{R}_N^n @>{\Delta}>> m\mathbf{D}_N^n \\
  \end{CD}
\end{eqnarray}
\end{prp}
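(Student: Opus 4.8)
The plan is to prove the identity $\Delta\circ Ref=Def\circ\Delta$ by a direct computation on an arbitrary $(k,\mbox{\boldmath $a$})\in m\mathbf{R}_N^n$, comparing the two resulting marked differences coordinate by coordinate. Since both $Ref$ and $Def$ advance the marker from $k$ to $k+_n1$ while $\Delta$ leaves the marker untouched, the marker components of the two composites agree automatically, and the entire content of the proposition lies in matching the difference parts.

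First I would unwind the definition of $Ref$ to observe that, although it is phrased in terms of the three indices $k-_n1,k,k+_n1$, the underlying map $ref(a,b,c)=(a,rav_N(a,c),c)$ of (1.5) fixes its first and third arguments. Hence $Ref(k,\mbox{\boldmath $a$})=(k+_n1,\mbox{\boldmath $b$})$, where $\mbox{\boldmath $b$}$ coincides with $\mbox{\boldmath $a$}$ at every coordinate except the single index $k$, at which $b_k=rav_N(a_{k-_n1},a_{k+_n1})$. Consequently the difference $d(\mbox{\boldmath $b$})$ can differ from $d(\mbox{\boldmath $a$})$ only at the two coordinates whose defining difference involves $a_k$: the coordinate $k$, where $d(\mbox{\boldmath $b$})_k=b_k-_Na_{k-_n1}$, and the coordinate $k+_n1$, where $d(\mbox{\boldmath $b$})_{k+_n1}=a_{k+_n1}-_Nb_k$; at all other coordinates $d(\mbox{\boldmath $b$})$ equals $d(\mbox{\boldmath $a$})$. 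This already reproduces the clause ``$e_i=d_i$ for $i\notin\{k,k+_n1\}$'' in the definition of $Def$.

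The core of the argument is then the pair of equalities, with $s:=a_{k+_n1}-_Na_{k-_n1}$,
\[
b_k-_Na_{k-_n1}=\left\lfloor\frac{s}{2}\right\rfloor,\qquad a_{k+_n1}-_Nb_k=\left\lceil\frac{s}{2}\right\rceil,
\]
combined with the telescoping identity $d_k+_Nd_{k+_n1}=s$. The latter holds because $(a_k-_Na_{k-_n1})+_N(a_{k+_n1}-_Na_k)=R_N\bigl((a_k-a_{k-_n1})+(a_{k+_n1}-a_k)\bigr)=a_{k+_n1}-_Na_{k-_n1}$. For the two displayed equalities I would substitute $b_k=a_{k-_n1}+_N\lfloor s/2\rfloor$ from the definition of $rav_N$ and reduce modulo $N$, using $\lfloor s/2\rfloor+\lceil s/2\rceil=s$. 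Since the values $f_k,f_{k+_n1}$ prescribed by $Def$ are exactly $\lfloor(d_k+_Nd_{k+_n1})/2\rfloor$ and $\lceil(d_k+_Nd_{k+_n1})/2\rceil$, which by the telescoping identity equal $\lfloor s/2\rfloor$ and $\lceil s/2\rceil$, this gives $d(\mbox{\boldmath $b$})_k=e_k$ and $d(\mbox{\boldmath $b$})_{k+_n1}=e_{k+_n1}$ and completes the verification.

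The step that I expect to demand the most care is the modular bookkeeping in these two equalities. Because $a_{k-_n1}$ and $a_{k+_n1}$ are distinct entries of a rhythm (note $k-_n1\neq k+_n1$ since $n\geq3$), $s$ lies in $\{1,\dots,N-1\}$, so both $\lfloor s/2\rfloor$ and $\lceil s/2\rceil$ lie in $\{0,\dots,N-1\}$ and the residue map $R_N$ acts as the identity on them; one must then verify that the nested ``$+_N$''/``$-_N$'' operations collapse without acquiring a spurious multiple of $N$, for instance that $R_N\bigl(a_{k-_n1}+\lfloor s/2\rfloor\bigr)-_Na_{k-_n1}$ reduces to $\lfloor s/2\rfloor$ rather than $\lfloor s/2\rfloor\pm N$. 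This is precisely the translation invariance recorded in (1.4), and keeping track of it carefully is the only genuinely delicate point in an otherwise routine coordinatewise check.
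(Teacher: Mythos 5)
Your proposal is correct and follows essentially the same route as the paper's proof: both reduce to the two difference coordinates affected by the single modified entry, then match them using the telescoping identity $d_k+_Nd_{k+_n1}=a_{k+_n1}-_Na_{k-_n1}$ and the relation $x=\lfloor x/2\rfloor+_N\lceil x/2\rceil$ on $\mathbf{Z}_N$. The only cosmetic difference is that the paper formally reduces to the case $n=3$ with marker $1$, whereas you keep general indices and spell out the modular bookkeeping (that $s\in\{1,\dots,N-1\}$ so $R_N$ fixes $\lfloor s/2\rfloor$ and $\lceil s/2\rceil$) slightly more explicitly.
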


\begin{proof}
Let $(k,\mbox{\boldmath $a$})=(k,(a_0,\cdots,a_{n-1}))\in m\mathbf{R}_N^n$ be an arbitrary marked rhythm, and let $Ref(k,\mbox{\boldmath $a$})=(k+_n1,\mbox{\boldmath $b$})$. It follows from Definition 1.3 that $b_i=a_i$ for any $i\in\mathbf{Z}_n\setminus\{k\}$. Furthermore, if we put $\Delta(k,\mbox{\boldmath $a$})=(k,\mbox{\boldmath $d$})$ and $Def(k,\mbox{\boldmath $d$})=(k+_n1,\mbox{\boldmath $e$})$, then it follows from Definition 1.4 that $e_i=d_i$ for any $i\in\mathbf{Z}_n\setminus\{k,k+_n1\}$.Therefore, for the proof of the commutativity, we have only to focus on the behavior of the subtriple $(a_{k-_n1},a_k,a_{k+_n1})$ of the rhythm part $\mbox{\boldmath $a$}$. It follows that we are reduced to proving the assertion for the case when $n=3$. Let $(1,(a_0,a_1,a_2))\in m\mathbf{R}_N^3$ be an arbitrary marked rhythm with marker equal to 1. It is mapped by $\Delta\circ Ref$ to
\begin{eqnarray}
&&\Delta(Ref(1,(a_0,a_1,a_2)))\nonumber\\
&&=\Delta(2,(a_0,rav_N(a_0,a_2),a_2))\nonumber\\
&&=(2,(a_0-_Na_2,rav_N(a_0,a_2)-_Na_0,a_2-_Nrav_N(a_0,a_2)))\nonumber\\
&&=(2,(a_0-_Na_2,\left\lfloor\frac{a_2-_Na_0}{2}\right\rfloor,(a_2-_Na_0)-_N\left\lfloor\frac{a_2-_Na_0}{2}\right\rfloor))\nonumber\\
&&=(2,(a_0-_Na_2,\left\lfloor\frac{a_2-_Na_0}{2}\right\rfloor,\left\lceil\frac{a_2-_Na_0}{2}\right\rceil)),
\end{eqnarray}
the last equality being a consequence of the equality $x=\left\lfloor\frac{x}{2}\right\rfloor+_N\left\lceil\frac{x}{2}\right\rceil$, which holds for any $x\in\mathbf{Z}_N$.
On the other hand, the value of $Def\circ\Delta$ at $(1,(a_0,a_1,a_2))$ is equal to
\begin{eqnarray}
&&Def(\Delta(1,(a_0,a_1,a_2)))\nonumber\\
&&=Def(1,(a_0-_Na_2,a_1-_Na_0,a_2-_Na_1))\nonumber\\
&&=(1,(a_0-_Na_2,\left\lfloor\frac{(a_1-_Na_0)+_N(a_2-_Na_1)}{2}\right\rfloor,\nonumber\\
&&\hspace{50mm}\left\lceil\frac{(a_1-_Na_0)+_N(a_2-_Na_1)}{2}\right\rceil)\nonumber\\
&&=(1,(a_0-_Na_2,\left\lfloor\frac{a_2-_Na_0}{2}\right\rfloor,\left\lceil\frac{a_2-_Na_0}{2}\right\rceil)).
\end{eqnarray}
Since the rightmost sides of (1.9) and (1.10) coincide, we see that the two maps $Def\circ\Delta$ and $\Delta\circ Def$ give one and the same value at $(1,(a_0,a_1,a_2))$. Since even if the marker is not equal to 1, a similar proof can be given, this completes the proof.
\end{proof}

\subsection{Finite dynamical systems $\mathbf{Ref}_N^n$ and $\mathbf{Def}_N^n$}
We introduce two finite dynamical systems employing the maps $Ref$ and $Def$.

\begin{df}
{\rm (1)} Let $\mathbf{Ref}_N^n$ denote the dynamical system $(m\mathbf{R}_N^n,Ref)$.\\
{\rm (2)} Let $\mathbf{Def}_N^n$ denote the dynamical system $(m\mathbf{D}_N^n,Def)$.\\
\end{df}

\noindent
The notion of {\it quasi-smoothness} is introduced through the periodicity of the self map $Ref$. For any finite dynamical system $\mathbf{F}=(X,F)$, let $Per(\mathbf{F})$ denote the set of periodic points:
\begin{eqnarray*}
Per(\mathbf{F})=\{x\in X|\mbox{ There exists a positive integer }m\mbox{ such that }F^m(x)=x\}.
\end{eqnarray*}

\begin{df}
A marked rhythm $\mbox{\boldmath $A$}\in m\mathbf{R}_N^n$ is {\rm quasi-smooth} if $\mbox{\boldmath $A$}\in Per(\mathbf{Ref}_N^n)$. A rhythm $\mbox{\boldmath $a$}\in\mathbf{R}_N^n$ is {\rm quasi-smooth} if there exists a $k\in\mathbf{Z}_n$ such that $\iota_k(\mbox{\boldmath $a$})$ is quasi-smooth as a marked rhythm, where $\iota_k:\mathbf{R}_N^n\rightarrow m\mathbf{R}_N^n$ denotes an inclusion defined by $\iota_k(\mbox{\boldmath $a$})=(k,\mbox{\boldmath $a$})\in m\mathbf{R}_N^n$.
\end{df}

The main theorem in this article is summarized as follows:\\

\hspace{5mm}{\it A rhythm is smooth if and only if it is quasi-smooth.}\\

\noindent
Thus we obtain another iterative method of construction for smooth rhythms, which is much simpler than the one employed in (Hazama 2022). Furthermore, as the reader will see in Section four, the long-term behavior of the self map $Ref$ is far more easier for us to understand than that of $Rav$.

\section{Monotone invariant measure}
We axiomatize the method of construction of quasi-smooth rhythms. This will enable us to prove the validity not only of our iterative method, but also of other possible methods which produce certain target objects with a desired property in a given dynamical system.\\

\subsection{Automorphism, invariance, and monotonicity}

\begin{df}
Let $\mathbf{F}=(X,F)$ be a finite dynamical system.\\
{\rm (1)} A bijection $\varphi:X\rightarrow X$ is called an {\rm automorphism} of $\mathbf{F}$ if it commutes with $F$, namely the equality
\begin{eqnarray}
F\circ \varphi=\varphi\circ F
\end{eqnarray}
holds. We denote by ``$\sim_{\varphi}$'' the equivalence relation on $X$ generated by $\varphi$, and put $\overline{X}=X/\sim_{\varphi}$, the quotient space of $X$ by $\sim_{\varphi}$.\\
{\rm (2)} A real valued function $\mu$ on $X$ is called an {\rm invariant measure} on $\mathbf{F}$ with respect to an automorphism $\varphi$ of $\mathbf{F}$, if
\begin{eqnarray}
\mu(\varphi(x))=\mu(x)
\end{eqnarray}
holds for any $x\in X$. \\
{\rm (3)} A real valued function $\mu$ on $X$ is called a {\rm monotone measure} on $\mathbf{F}$, if
\begin{eqnarray}
\mu(F(x))\geq \mu (x)
\end{eqnarray}
holds for any $x\in X$. 
\end{df}

\noindent
Under the conditions (2.1) and (2.2), the self-map $F$ as well as the function $\mu$ on $X$ descends to the quotient space $\overline{X}$. We record this fact for later use.

\begin{prp}
{\rm (1)} There exists a unique self map $\overline{F}$ on the quotient $\overline{X}$ such that the following diagram commutes:
\begin{eqnarray}
  \begin{CD}
     X @>{F}>> X \\
  @V{\pi}VV @V {\pi}VV \\
    \overline{X} @>{\overline{F}}>> \overline{X} \\
  \end{CD}
\end{eqnarray}
where $\pi:X\rightarrow\overline{X}$ denotes the quotient map. \\
{\rm (2)} The function $\mu$ descends to a unique function $\overline{\mu}$ on the quotient $\overline{X}$ such that the following diagram commutes:
\begin{eqnarray}
  \begin{CD}
     X @>{\mu}>> \mathbf{R} \\
  @V{\pi}VV @V {id_{\mathbf{R}}}VV \\
    \overline{X} @>{\overline{\mu}}>> \mathbf{R} \\
  \end{CD}
\end{eqnarray}
\end{prp}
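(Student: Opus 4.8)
The plan is to prove both parts by the standard universal property of quotient maps: a map out of $\overline{X}=X/\sim_\varphi$ is well-defined precisely when the corresponding map out of $X$ is constant on each $\sim_\varphi$-equivalence class. Since $\sim_\varphi$ is the equivalence relation generated by the bijection $\varphi$, two points $x,x'\in X$ satisfy $x\sim_\varphi x'$ if and only if $x'=\varphi^m(x)$ for some $m\in\mathbf{Z}$. Thus in each part I must verify that the relevant composite takes equal values at $x$ and at $\varphi^m(x)$ for every integer $m$, and by induction it suffices to check this for $m=1$ (and, for negative $m$, for $\varphi^{-1}$, which is available since $\varphi$ is a bijection).

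For part (1), define a candidate $\overline{F}$ by $\overline{F}(\pi(x))=\pi(F(x))$. To see this is well-defined, I would show that $\pi\circ F$ is constant on $\sim_\varphi$-classes. Using the automorphism condition (2.1), $F\circ\varphi=\varphi\circ F$, I compute $\pi(F(\varphi(x)))=\pi(\varphi(F(x)))$, and since $\varphi(F(x))\sim_\varphi F(x)$ the quotient map sends both to the same point, so $\pi(F(\varphi(x)))=\pi(F(x))$. An easy induction on $m$ then gives $\pi(F(\varphi^m(x)))=\pi(F(x))$ for all $m\geq 0$, and applying the same argument to $\varphi^{-1}$ (which also commutes with $F$, being the inverse of an automorphism) covers $m<0$. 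Hence $\overline{F}$ is a well-defined self-map on $\overline{X}$, and the defining formula $\overline{F}\circ\pi=\pi\circ F$ is exactly the commutativity of diagram (2.4). Uniqueness is immediate: since $\pi$ is surjective, any $\overline{F}$ making (2.4) commute must satisfy $\overline{F}(\pi(x))=\pi(F(x))$, which determines it on all of $\overline{X}$.

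For part (2), the argument is entirely analogous but uses the invariance condition (2.2) in place of (2.1). I define $\overline{\mu}(\pi(x))=\mu(x)$, and well-definedness reduces to showing $\mu(\varphi^m(x))=\mu(x)$ for all $m$; this follows from (2.2) by the same induction, again extended to negative $m$ via $\varphi^{-1}$. Here the target is $\mathbf{R}$ with the identity on the right edge of (2.5), so there is no further structure to check, and the commutativity $\overline{\mu}\circ\pi=\mu$ together with uniqueness follows from surjectivity of $\pi$ exactly as before.

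I do not expect any genuine obstacle in this proof; it is a routine verification that two natural structures descend through a quotient by a commuting bijection. The only point demanding a little care is the reduction from an arbitrary element of the generated equivalence relation to a single application of $\varphi$: one must remember that $\sim_\varphi$ is generated by $\varphi$, so its classes are the full $\varphi$-orbits $\{\varphi^m(x):m\in\mathbf{Z}\}$, and to invoke $\varphi^{-1}$ one uses that the inverse of an automorphism is again an automorphism (and the inverse of an invariance-preserving map preserves invariance). Once this is noted, both descents follow immediately from the hypotheses (2.1) and (2.2).
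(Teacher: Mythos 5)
Your proof is correct. The paper itself offers no proof of this proposition---it simply records the descent of $F$ and $\mu$ as a standard fact following from (2.1) and (2.2)---and your routine universal-property verification (constancy of $\pi\circ F$ and $\mu$ on $\varphi$-orbits, plus uniqueness from surjectivity of $\pi$) is exactly the argument the paper implicitly takes for granted.
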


The following proposition relates the monotonicity of $\mu$ with that of $\overline{\mu}$.

\begin{prp}
Suppose that $\mu$ is an invariant measure on $\mathbf{F}$ with respect to an automorphism $\varphi$ of $\mathbf{F}$. Let $\overline{F}$ and $\overline{\mu}$ denote the induced maps defined as above. Then $\mu$ is a monotone measure on $\mathbf{F}$ if and only if $\overline{\mu}$ is a monotone measure on $\overline{\mathbf{F}}$.
\end{prp}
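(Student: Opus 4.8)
The plan is to prove both implications by a straightforward diagram chase, using the two defining commutativities furnished by Proposition 2.1: the relation $\pi\circ F=\overline{F}\circ\pi$ read off diagram (2.4), and the relation $\overline{\mu}\circ\pi=\mu$ read off diagram (2.5). The only structural input beyond these two identities is that the quotient map $\pi:X\to\overline{X}$ is surjective, which lets me lift an arbitrary element of $\overline{X}$ to a representative in $X$. Since the argument threads the same identities in opposite orders, I treat the two directions with parallel computations.

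For the forward implication, assume $\mu$ is monotone on $\mathbf{F}$. Given an arbitrary $\overline{x}\in\overline{X}$, I choose $x\in X$ with $\pi(x)=\overline{x}$ and compute
\[
\overline{\mu}(\overline{F}(\overline{x}))=\overline{\mu}(\overline{F}(\pi(x)))=\overline{\mu}(\pi(F(x)))=\mu(F(x))\geq\mu(x)=\overline{\mu}(\pi(x))=\overline{\mu}(\overline{x}),
\]
where the second equality uses (2.4), the third and fifth use (2.5), and the inequality is the monotonicity of $\mu$ at $x$. As $\overline{x}$ is arbitrary, this shows $\overline{\mu}$ is monotone on $\overline{\mathbf{F}}$.

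For the reverse implication, assume $\overline{\mu}$ is monotone on $\overline{\mathbf{F}}$. For any $x\in X$ I compute
\[
\mu(F(x))=\overline{\mu}(\pi(F(x)))=\overline{\mu}(\overline{F}(\pi(x)))\geq\overline{\mu}(\pi(x))=\mu(x),
\]
invoking (2.5), then (2.4), then the monotonicity of $\overline{\mu}$ applied at the point $\pi(x)\in\overline{X}$, and (2.5) once more. Hence $\mu(F(x))\geq\mu(x)$ for all $x$, i.e. $\mu$ is monotone on $\mathbf{F}$.

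I do not expect a genuine obstacle: the entire content is carried by Proposition 2.1, which already guarantees that $\overline{F}$ and $\overline{\mu}$ are well defined and fit into the stated diagrams. The only point deserving a moment of care is the forward direction's passage through a representative $x$ of $\overline{x}$; one should observe that the resulting chain does not depend on the chosen representative, which is automatic because each of its endpoints, $\overline{\mu}(\overline{F}(\overline{x}))$ and $\overline{\mu}(\overline{x})$, is expressed intrinsically in terms of $\overline{x}$ via (2.4) and (2.5). In effect, it is the invariance hypothesis (2.2) that makes the monotonicity inequality descend consistently to $\sim_{\varphi}$-classes, so no separate verification of representative-independence is needed.
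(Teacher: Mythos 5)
Your proof is correct and follows essentially the same route as the paper's: lift $\overline{x}$ along the surjection $\pi$, then chase the two commutative diagrams $\pi\circ F=\overline{F}\circ\pi$ and $\overline{\mu}\circ\pi=\mu$ to identify $\overline{\mu}(\overline{F}(\overline{x}))$ with $\mu(F(x))$ and $\overline{\mu}(\overline{x})$ with $\mu(x)$. The paper merely presents the two directions as a single ``if and only if'' equivalence of inequalities rather than as two separate implications, but the computation is identical.
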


\begin{proof}
For any $\overline{x}\in\overline{X}$, take an arbitrary $x\in \pi^{-1}(\{\overline{x}\})$. Then we have
\begin{eqnarray*}
\overline{\mu}(\overline{F}(\overline{x}))&=&\overline{\mu}(\overline{F}(\pi(x)))\\
&=&\overline{\mu}(\pi(F(x)))\hspace{15mm}(\mbox{by (2.4)})\\
&=&\mu(F(x))\hspace{20mm}(\mbox{by (2.5)})
\end{eqnarray*}
Since $\overline{\mu}(\overline{x})=\overline{\mu}(\pi(x))=\mu(x)$, the above computation implies that the inequality $\overline{\mu}(\overline{F}(\overline{x}))\geq \overline{\mu}(\overline{x})$ holds if and only if $\mu(F(x))\geq \mu (x)$. This completes the proof.
\end{proof}

For any dynamical system $(X,F)$, let $Per(X,F)$ denote the set of periodic points, namely
\begin{eqnarray*}
Per(X,F)=\{x\in X|\mbox{ there exists a positive integer $m$ such that $F^m(x)=x$}\}.
\end{eqnarray*}

\noindent
A monotone measure provides us with a useful necessary condition for an element $x\in X$ to be periodic.

\begin{prp}
Let $\mu$ be a monotone measure on $(X,F)$. If $x\in Per(X,F)$, then we have
\begin{eqnarray}
\mu(F(x))=\mu(x).
\end{eqnarray}
Furthermore, for any positive integer $k$, we have
\begin{eqnarray}
\mu(F^{k+1}(x))=\mu(F^k(x)).
\end{eqnarray}
\end{prp}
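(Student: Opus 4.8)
The plan is to exploit the monotonicity inequality (2.3) together with periodicity to produce a chain of inequalities that is forced to collapse into equalities. First I would observe that if $x \in Per(X,F)$, then there exists a positive integer $m$ with $F^m(x) = x$. Applying monotonicity repeatedly along the orbit, I obtain
\begin{eqnarray*}
\mu(x) \leq \mu(F(x)) \leq \mu(F^2(x)) \leq \cdots \leq \mu(F^m(x)) = \mu(x).
\end{eqnarray*}
Since the first and last terms are equal, every intermediate inequality must in fact be an equality. In particular $\mu(F(x)) = \mu(x)$, which is exactly (2.6).

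For the second assertion (2.7), the key point I would stress is that $Per(X,F)$ is closed under $F$: if $F^m(x) = x$, then $F^m(F^k(x)) = F^k(F^m(x)) = F^k(x)$, so $F^k(x)$ is itself a periodic point for every $k \geq 0$. Having established this, I would simply apply the already-proven statement (2.6) to the periodic point $F^k(x)$ in place of $x$. This yields $\mu(F(F^k(x))) = \mu(F^k(x))$, that is, $\mu(F^{k+1}(x)) = \mu(F^k(x))$, which is (2.7).

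An alternative packaging of the same idea is to note that the collapse of the chain above already gives $\mu(F^{j+1}(x)) = \mu(F^j(x))$ for all $0 \leq j \leq m-1$, and then to extend this to all $k$ by periodicity of the function $j \mapsto \mu(F^j(x))$ with period $m$. Either route is routine once the telescoping chain is set up.

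I do not expect a genuine obstacle here; the content is entirely elementary and rests only on the finiteness implicit in periodicity and on the single inequality (2.3). The one point that deserves care, rather than difficulty, is the observation that the orbit of a periodic point consists entirely of periodic points, since (2.7) is really a statement about each point along the orbit and not just about $x$ itself. Making that closure property explicit is what turns the one-step equality (2.6) into the uniform statement (2.7).
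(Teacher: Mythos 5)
Your proposal is correct and follows essentially the same route as the paper: a telescoping chain of monotonicity inequalities closed off by periodicity to force $\mu(F(x))=\mu(x)$, followed by applying that equality to $F^k(x)$, which is itself periodic. The only cosmetic difference is that the paper phrases the collapse of the chain as a proof by contradiction, while you argue it directly; your explicit verification that $F^k(x)$ is periodic spells out a step the paper merely asserts.
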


\begin{proof}
Contrary to (2.6), suppose that we have
\begin{eqnarray*}
\mu(F(x))>\mu(x).
\end{eqnarray*}
Let $m$ be a period of $x$ with respect to $F$. By the monotonicity (2.3) we have the following chain of inequalities.
\begin{eqnarray*}
\mu(x)<\mu(F(x))\leq \mu(F^2(x))\leq \mu (F^m(x))=\mu(x).
\end{eqnarray*}
This contradiction shows that we must have $\mu(F(x))=\mu(x)$. The equality (2.7) follows from (2.6), since $F^k(x)$ is also a periodic point. This completes the proof.
\end{proof}

In view of the importance of the condition (2.6) and (2.7) for the study of the periodic points of a dynamical system, we introduce the following.

\begin{df}
${\rm (1)}$ A point $x\in X$ is said to be $(\mu,F)$-{\rm invariant} if the equality $\mu(F(x))=\mu(x)$ holds.\\
${\rm (2)}$ A point $x\in X$ is said to be $\mu$-{\rm stable} if it is $(\mu,F^{\ell})$-invariant for any positive integer $\ell$. The set of $\mu$-stable points is denoted by $St(\mu)$.\\
\end{df}

\noindent
Through this terminology, Proposition 2.3 can be rephrased as follows:

\begin{cor}
Let $\mu$ be a monotone measure on $(X,F)$. Then we have
\begin{eqnarray*}
Per(X,F)\subset St(\mu).
\end{eqnarray*}
\end{cor}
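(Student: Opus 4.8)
The plan is to obtain the corollary as an immediate consequence of Proposition 2.3, whose two conclusions (2.6) and (2.7) already encode essentially all the content. First I would fix an arbitrary periodic point $x\in Per(X,F)$ and unwind the target: by Definition 2.4 (2), showing $x\in St(\mu)$ amounts to verifying that $x$ is $(\mu,F^{\ell})$-invariant for every positive integer $\ell$, that is, that $\mu(F^{\ell}(x))=\mu(x)$ holds for all $\ell\geq 1$.

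The key step is to telescope the equalities supplied by Proposition 2.3. Equation (2.6) furnishes the base case $\mu(F(x))=\mu(x)$, while equation (2.7), valid for every positive integer $k$, gives $\mu(F^{k+1}(x))=\mu(F^{k}(x))$. Chaining these together produces
\[
\mu(x)=\mu(F(x))=\mu(F^{2}(x))=\cdots=\mu(F^{\ell}(x))
\]
for each $\ell$, which is exactly the desired identity. I would phrase this cleanly as an induction on $\ell$: the case $\ell=1$ is (2.6), and the inductive step deduces $\mu(F^{\ell+1}(x))=\mu(F^{\ell}(x))=\mu(x)$ from (2.7) together with the inductive hypothesis.

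Since Proposition 2.3 has already done the real work, I do not expect any genuine obstacle here. The only point meriting attention is that (2.7) must legitimately apply at each stage, which it does because every iterate $F^{k}(x)$ of a periodic point is again periodic — the same observation invoked in the proof of Proposition 2.3. Once this is noted, the inclusion $Per(X,F)\subset St(\mu)$ follows at once.
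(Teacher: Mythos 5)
Your proposal is correct and matches the paper's intent exactly: the paper states the corollary as a direct rephrasing of Proposition 2.3, and your induction merely makes explicit the telescoping of (2.6) and (2.7) into $\mu(F^{\ell}(x))=\mu(x)$ for all $\ell\geq 1$, which is precisely the definition of $\mu$-stability. No gap; the argument is the same.
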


\noindent
In other words, the $\mu$-stability is a necessary condition for the periodicity.\\

Under the existence of an automorphism of a dynamical system $(X,F)$, we can relate the periodic points of  $(X,F)$ with the periodic points of the quotient system $(\overline{X},\overline{F})$.  

\begin{prp}
Let $\varphi$ be an automorphism of a finite dynamical system $(X,F)$. For an element $x\in X$ to belong $Per(X,F)$, it is necessary and sufficient that $\pi(x)\in Per(\overline{X},\overline{F})$.
\end{prp}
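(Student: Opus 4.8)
The plan is to prove the two implications separately, with necessity immediate from the commutative square of Proposition 2.1 and sufficiency resting on the commutation of $F$ with $\varphi$ together with the finiteness of $X$. Throughout I will use that iterating the diagram (2.4) gives $\pi\circ F^m=\overline{F}^m\circ\pi$ for every positive integer $m$, which follows by an immediate induction from $\pi\circ F=\overline{F}\circ\pi$.

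For necessity, suppose $x\in Per(X,F)$, so that $F^m(x)=x$ for some positive integer $m$. Then $\overline{F}^m(\pi(x))=\pi(F^m(x))=\pi(x)$, so $\pi(x)\in Per(\overline{X},\overline{F})$. This direction uses nothing beyond the induced relation above.

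For sufficiency, assume $\overline{F}^m(\pi(x))=\pi(x)$ for some positive integer $m$. Applying $\pi\circ F^m=\overline{F}^m\circ\pi$ once more, this reads $\pi(F^m(x))=\pi(x)$; that is, $F^m(x)$ lies in the same $\varphi$-orbit as $x$. Since the fibers of $\pi$ are exactly the orbits of $\varphi$, there is an integer $j$ with $F^m(x)=\varphi^j(x)$. The idea is now to bootstrap this single relation into genuine $F$-periodicity. Because $F$ and $\varphi$ commute by (2.1), one checks by induction that $F^{km}(x)=\varphi^{kj}(x)$ for every $k\geq 0$: applying $F^m$ to $F^{km}(x)=\varphi^{kj}(x)$ and moving the power of $\varphi$ past $F^m$ yields $F^{(k+1)m}(x)=\varphi^{kj}(F^m(x))=\varphi^{kj}(\varphi^j(x))=\varphi^{(k+1)j}(x)$.

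It remains to choose $k$ so that $\varphi^{kj}(x)=x$, and this is where finiteness enters. Since $X$ is finite and $\varphi$ is a bijection, the $\varphi$-orbit of $x$ is finite, so there is a positive integer $p$ with $\varphi^p(x)=x$; consequently $\varphi^{pj}(x)=x$ for every integer $j$. Taking $k=p$ then gives $F^{pm}(x)=\varphi^{pj}(x)=x$, so $x\in Per(X,F)$, completing the argument. The main (and really only) subtlety is this last manoeuvre: periodicity of $\pi(x)$ guarantees only that $F^m$ returns $x$ to its orbit, not to itself, and the gap is closed precisely by exploiting the commutativity of $F$ with $\varphi$ to iterate the orbit-shift and then annihilating the accumulated power of $\varphi$ via the finite order of $\varphi$ on the orbit of $x$.
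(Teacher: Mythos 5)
Your proof is correct and follows essentially the same route as the paper's: necessity via the induced relation $\pi\circ F^m=\overline{F}^m\circ\pi$, and sufficiency by writing $F^m(x)=\varphi^j(x)$, iterating with the commutativity $F\circ\varphi=\varphi\circ F$ to get $F^{km}(x)=\varphi^{kj}(x)$, and killing the accumulated power of $\varphi$ by finiteness. The only cosmetic difference is that you annihilate $\varphi^{kj}$ using the period of $x$ on its own $\varphi$-orbit, whereas the paper takes $p$ to be the order of $\varphi$ on all of $X$; both are the same finiteness argument.
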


\begin{proof}
(Necessity) Let $x\in Per(X,F)$ so that there exists a positive integer $m$ such that 
\begin{eqnarray}
F^m(x)=x.
\end{eqnarray}
The left hand side is mapped through $\pi$ to 
\begin{eqnarray}
\pi(F^m(x))=\overline{F}^m(\pi(x))
\end{eqnarray}
 by the very definition of $\overline{F}$, namely by the equality $\pi\circ F=\overline{F}\circ \pi$. Since $\pi$ maps the right hand side to $\pi(x)$, this shows the equality $\overline{F}^m(\pi(x))=\pi(x)$. Hence $\pi(x)\in Per(\overline{X},\overline{F})$. \\
(Sufficiency). 
Let $\pi(x)\in Per(\overline{X},\overline{F})$ so that there exists a positive integer $m$ such that 
\begin{eqnarray}
\overline{F}^m(\pi(x))=\pi(x).
\end{eqnarray}
The left hand side is equal to $\overline{F}^m(\pi(x))=\pi(F^m(x))$ by (2.8). Hence, together with (2.9), it implies that
\begin{eqnarray*}
F^m(x)\sim_{\varphi} x.
\end{eqnarray*}
It follows from the definition of the relation $\sim_{\varphi}$ that there exists a positive integer $k$ such that
\begin{eqnarray}
F^m(x)=\varphi^k(x).
\end{eqnarray}
Since $\varphi$ commutes with $F$, applying $F^m$ on both sides, we see that
\begin{eqnarray*}
F^{2m}(x)=F^m(\varphi^k(x))=\varphi^k(F^m(x))=\varphi^{2k}(x),
\end{eqnarray*}
the last equality being a consequence of (2.10). Repeating the process, we see that, for any positive integer $p$, we have
\begin{eqnarray*}
F^{pm}(x)=\varphi^{pk}(x).
\end{eqnarray*}
Therefore, when $p$ is the order of the automorphism $\varphi$, we have
\begin{eqnarray*}
F^{pm}(x)=\varphi^{pk}(x)=x,
\end{eqnarray*}
which shows that $x\in Per(X,F)$. This completes the proof.
\end{proof}

\section{Monotone invariant measure $\mu_{m\mathbf{R}}$ on $m\mathbf{Ref}_N^n$}
We introduce a $\mathbf{Z}$-valued function $\mu_{m\mathbf{R}}$ on the dynamical system $\mathbf{Ref}_N^n=(m\mathbf{R}_N^n, Ref)$. Recall that the self-map $mtr$ on $m\mathbf{R}_N^n$ is an automorphism of $\mathbf{Ref}_N^n$ in the sense of Definition 2.1 (1), since we have the equality (1.7). We will show that $\mu_{m\mathbf{R}}$ is a monotone invariant measure with respect to the automorphism $mtr$.

\begin{df}
For any $\mbox{\boldmath $a$}=(a_0,\cdots,a_{n-1})\in\mathbf{R}_N^n$, let $\mu_{\mathbf{R}}:\mathbf{R}_N^n\rightarrow\mathbf{Z}$ be defined by
\begin{eqnarray}
\mu_{\mathbf{R}}(\mbox{\boldmath $a$})=\prod_{i=0}^{n-1}(a_{i+_n1}-_Na_i),
\end{eqnarray}
where the product on the right hand side is that of integers. For any marked rhythm $(k,\mbox{\boldmath $a$})\in m\mathbf{R}_N^n$, we put
\begin{eqnarray}
\mu_{m\mathbf{R}}(k,\mbox{\boldmath $a$})=\mu_{\mathbf{R}}(\mbox{\boldmath $a$}).
\end{eqnarray}
\end{df} 

\noindent
By the very definition, the function $\mu_{\mathbf{R}}$ satisfies the invariance property
\begin{eqnarray}
\mu_{\mathbf{R}}\circ tr = \mu_{\mathbf{R}},
\end{eqnarray}
since each factor on the right hand side of (3.1) depends only on the difference of the neighboring entries. Furthermore, it follows from (3.2) and (3.3) that
\begin{eqnarray}
\mu_{m\mathbf{R}}\circ mtr = \mu_{m\mathbf{R}}.
\end{eqnarray}
Hence we obtain the following.

\begin{prp}
The function $\mu_{m\mathbf{R}}$ on $m\mathbf{R}_N^n$ defines an invariant measure on the dynamical system $\mathbf{Ref}_N^n$ with respect to the automorphism $mtr$ of $\mathbf{Ref}_N^n$.
\end{prp}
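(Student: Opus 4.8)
The plan is to unwind Definition 2.1(2): to call $\mu_{m\mathbf{R}}$ an invariant measure on $\mathbf{Ref}_N^n$ with respect to $mtr$, I must check two things — that $mtr$ is genuinely an automorphism of the dynamical system, and that $\mu_{m\mathbf{R}}$ is constant along $mtr$-orbits. The first of these is already in hand: the commutation relation (1.7), $Ref\circ mtr=mtr\circ Ref$, is exactly the defining property (2.1) of an automorphism, and $mtr$ is bijective by its construction (1.3). So the entire content of the proposition reduces to verifying the invariance equation (2.2) for the pair $(\mu_{m\mathbf{R}},mtr)$, namely $\mu_{m\mathbf{R}}(mtr(k,\mbox{\boldmath $a$}))=\mu_{m\mathbf{R}}(k,\mbox{\boldmath $a$})$ for every marked rhythm.

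First I would strip off the marker. By (3.2) the value $\mu_{m\mathbf{R}}(k,\mbox{\boldmath $a$})$ does not depend on $k$ at all, equaling $\mu_{\mathbf{R}}(\mbox{\boldmath $a$})$; and by the definition (1.3) of $mtr$ the marker is left untouched, since $mtr(k,\mbox{\boldmath $a$})=(k,tr(\mbox{\boldmath $a$}))$. Hence the desired identity collapses to the unmarked statement $\mu_{\mathbf{R}}(tr(\mbox{\boldmath $a$}))=\mu_{\mathbf{R}}(\mbox{\boldmath $a$})$, which is precisely (3.3).

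The remaining — and only substantive — step is then (3.3) itself. Here I would simply inspect the product (3.1): each factor is a consecutive difference $a_{i+_n1}-_Na_i$, and $tr$ sends $\mbox{\boldmath $a$}$ to $(a_0+_N1,\dots,a_{n-1}+_N1)$ by (1.2). Since adding the same constant modulo $N$ to two entries leaves their $-_N$-difference unchanged, every factor of the product is preserved, and therefore so is the product. Chaining $\mu_{\mathbf{R}}(tr(\mbox{\boldmath $a$}))=\mu_{\mathbf{R}}(\mbox{\boldmath $a$})$ back through (3.2) then yields (3.4), which is literally the invariance (2.2), and the proposition follows.

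I do not expect a genuine obstacle here: the argument is a matter of quoting the already-established relations (1.7), (3.2) and (3.3) and observing that their conjunction is exactly the definition being verified. If anything needs care, it is the single computational point inside (3.3) — that $-_N$-differences are translation invariant, i.e. $(a+_N1)-_N(b+_N1)=a-_Nb$ — but this is immediate from the definition of $-_N$ as reduction of the integer difference modulo $N$, the added constant cancelling before the reduction is applied.
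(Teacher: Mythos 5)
Your proof is correct and follows essentially the same route as the paper: the paper likewise takes (1.7) as giving the automorphism property, derives (3.3) from the observation that each factor of the product (3.1) is translation-invariant, and then combines (3.2) and (3.3) to get the invariance (3.4) of $\mu_{m\mathbf{R}}$ under $mtr$. Your explicit check that $(a+_N1)-_N(b+_N1)=a-_Nb$ just spells out the one computational point the paper leaves implicit.
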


By the invariance (3.3), $\mu_{\mathbf{R}}$ descends to the quotient $\mathbf{D}_N^n$. To be more specific, let $\mu_{\mathbf{D}}:\mathbf{D}_N^n\rightarrow\mathbf{Z}$ be defined by
\begin{eqnarray*}
\mu_{\mathbf{D}}(\mbox{\boldmath $d$})=\prod_{i=0}^{n-1}d_i
\end{eqnarray*}
for any $\mbox{\boldmath $d$}=(d_0,d_1,\cdots,d_{n-1})\in\mathbf{D}_N^n$. Then we have the commutative diagram:
\begin{eqnarray}
  \begin{CD}
     \mathbf{R}_N^n @>{\mu_{\mathbf{R}}}>> \mathbf{Z} \\
  @V{d}VV @V{id_{\mathbf{Z}}}VV \\
    \mathbf{D}_N^n @>{\mu_{\mathbf{D}}}>> \mathbf{Z} \\
  \end{CD}
\end{eqnarray}
\noindent
We extend the domain $\mathbf{D}_N^n$ of $\mu_{\mathbf{D}}$ to $m\mathbf{D}_N^n$ in the same way as above, and denote the extended measure by $\mu_{m\mathbf{D}}$. Namely, we put
\begin{eqnarray*}
\mu_{m\mathbf{D}}(k,\mbox{\boldmath $d$})=\mu_{\mathbf{D}}(\mbox{\boldmath $d$}).
\end{eqnarray*}
It follows that we have the commutative diagram.
\begin{eqnarray}
  \begin{CD}
     m\mathbf{R}_N^n @>{\mu_{m\mathbf{R}}}>> \mathbf{Z} \\
  @V{\Delta}VV @V{id_{\mathbf{Z}}}VV \\
    m\mathbf{D}_N^n @>{\mu_{m\mathbf{D}}}>> \mathbf{Z} \\
  \end{CD}
\end{eqnarray}

\noindent
Thus $\mu_{m\mathbf{D}}$ is identified with $\overline{\mu_{m\mathbf{R}}}$, the map on the quotient $m\mathbf{D}_N^n=m\mathbf{R}_N^n/\sim_{mtr}$ induce from $\mu_{m\mathbf{R}}$. Therefore the general result Proposition 2.2 gives us the following specific result.

\begin{prp}
For any $\mbox{\boldmath $A$}\in m\mathbf{R}_N^n$, it belongs to $Per(\mathbf{Ref})$ if and only if $\Delta(\mbox{\boldmath $A$})\in Per(\mathbf{Def})$.
\end{prp}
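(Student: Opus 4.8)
The plan is to recognize this proposition as a direct instance of the abstract periodicity equivalence proved in Proposition 2.5, applied to the finite dynamical system $\mathbf{F}=\mathbf{Ref}_N^n=(m\mathbf{R}_N^n,Ref)$ equipped with the automorphism $\varphi=mtr$. Since Proposition 2.5 is phrased entirely in terms of an automorphism $\varphi$, its quotient map $\pi$, and the induced self-map $\overline{F}$, the work consists not of any new computation but of checking that each of these abstract ingredients is correctly instantiated by the concrete objects of Section 1, after which the general conclusion will read verbatim as the claimed statement. Note that, despite its placement in Section 3, the assertion is purely about periodicity and does not actually invoke the measures $\mu_{m\mathbf{R}}$ or $\mu_{m\mathbf{D}}$.

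First I would confirm that $mtr$ is an automorphism of $\mathbf{Ref}_N^n$ in the sense of Definition 2.1(1): this is precisely the commutation relation (1.7), $Ref\circ mtr=mtr\circ Ref$, which is condition (2.1). Consequently the equivalence relation $\sim_{mtr}$, the quotient $m\mathbf{R}_N^n/\sim_{mtr}$, and the quotient map $\pi$ are all defined. By the identification recorded in Section 1.1, namely $m\mathbf{D}_N^n=m\mathbf{R}_N^n/\sim_{mtr}$, the quotient map $\pi$ is exactly the difference map $\Delta\colon m\mathbf{R}_N^n\to m\mathbf{D}_N^n$, so $\overline{X}=m\mathbf{D}_N^n$ and $\pi=\Delta$.

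Next I would identify the induced self-map $\overline{Ref}$ on the quotient with $Def$. Proposition 2.2(1), applied to $(m\mathbf{R}_N^n,Ref)$ with the automorphism $mtr$, yields a \emph{unique} self-map $\overline{Ref}$ on $m\mathbf{D}_N^n$ satisfying $\pi\circ Ref=\overline{Ref}\circ\pi$, that is, $\Delta\circ Ref=\overline{Ref}\circ\Delta$. On the other hand, Proposition 1.1 furnishes the commutative diagram $\Delta\circ Ref=Def\circ\Delta$. Combining these via the uniqueness clause of Proposition 2.2(1) forces $\overline{Ref}=Def$, so the quotient system $\overline{\mathbf{Ref}_N^n}=(\overline{X},\overline{F})$ coincides with $\mathbf{Def}_N^n=(m\mathbf{D}_N^n,Def)$.

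With these identifications in hand, Proposition 2.5 specializes to the assertion that, for $\mbox{\boldmath $A$}\in m\mathbf{R}_N^n$, membership in $Per(\mathbf{Ref}_N^n)$ is necessary and sufficient for $\pi(\mbox{\boldmath $A$})=\Delta(\mbox{\boldmath $A$})$ to lie in $Per(\mathbf{Def}_N^n)$, which is exactly the statement to be proved. The argument presents no genuine obstacle, but the step requiring the most care is the identification $\overline{Ref}=Def$: it cannot be read off from Proposition 1.1 alone, since commutativity of the diagram only shows that $Def$ is \emph{a} descended map, and one must additionally invoke the uniqueness guaranteed by Proposition 2.2(1) to conclude that it is \emph{the} induced map $\overline{Ref}$ to which Proposition 2.5 refers.
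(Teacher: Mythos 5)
Your proposal is correct and takes essentially the same route as the paper: the paper likewise obtains this proposition by viewing $(m\mathbf{D}_N^n,Def)$ as the quotient of $\mathbf{Ref}_N^n$ by the automorphism $mtr$ (so that $\pi=\Delta$ and $\overline{Ref}=Def$, via Proposition 1.1 and the uniqueness of the induced map) and then invoking the general periodicity equivalence of Section 2. The only blemishes are citational: in the paper's numbering the abstract periodicity equivalence is Proposition 2.4 (not 2.5) and the descent/uniqueness result is Proposition 2.1(1) (not 2.2(1)) --- indeed your write-up is more explicit than the paper's one-line derivation, which itself miscites the needed result as ``Proposition 2.2.''
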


\noindent
Furthermore we can show the monotonicity of the measures $\mu_{m\mathbf{R}}$ and $\mu_{m\mathbf{D}}$.

\begin{prp}
${\rm (1)}$ For any $(k, \mbox{\boldmath $a$})\in m \mathbf{R}_N^n$, we have
\begin{eqnarray*}
\mu_{m\mathbf{R}}(Ref(k,\mbox{\boldmath $a$}))\geq \mu_{m\mathbf{R}}(k,\mbox{\boldmath $a$}).
\end{eqnarray*}
${\rm (2)}$ For any $(k,\mbox{\boldmath $d$})\in m\mathbf{D}_N^n$, we have
\begin{eqnarray}
\mu_{m\mathbf{D}}(Def(k,\mbox{\boldmath $d$}))\geq \mu_{m\mathbf{D}}(k,\mbox{\boldmath $d$}).
\end{eqnarray}
${\rm (3)}$ When $\mbox{\boldmath $d$}=(d_0,\cdots,d_{n-1})$, the equality in $(3.7)$ holds if and only if $|d_k-d_{k+_n1}|\leq 1$.
\end{prp}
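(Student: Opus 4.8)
The plan is to establish parts (2) and (3) first, since both hinge on a single elementary inequality about the product of two integers with a prescribed sum, and then to deduce part (1) formally from (2) via the commutative diagrams already available. For (2), the starting observation is that $Def(k,\mathbf{d})=(k+_n1,\mathbf{e})$ alters only the two entries at positions $k$ and $k+_n1$, replacing $(d_k,d_{k+_n1})$ by $(f_k,f_{k+_n1})=\bigl(\lfloor s/2\rfloor,\lceil s/2\rceil\bigr)$ with $s=d_k+_Nd_{k+_n1}$. Because every entry of a difference lies in $\mathbf{Z}_N^*=\{1,\dots,N-1\}$, all the unchanged factors $\prod_{i\notin\{k,k+_n1\}}d_i$ are strictly positive, so dividing them out reduces the inequality $\mu_{m\mathbf{D}}(Def(k,\mathbf{d}))\geq\mu_{m\mathbf{D}}(k,\mathbf{d})$ to the two-entry inequality $f_kf_{k+_n1}\geq d_kd_{k+_n1}$.

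The first genuinely non-formal point to settle is that the modular sum $s$ coincides with the ordinary integer sum, i.e. that no wraparound occurs. This is exactly where the hypothesis $n\geq 3$ is used: since $\sum_i d_i=N$ and each of the $n-2\geq 1$ remaining entries is at least $1$, one has $d_k+d_{k+_n1}=N-\sum_{i\notin\{k,k+_n1\}}d_i\leq N-1<N$, so $s=d_k+d_{k+_n1}$. Consequently $f_k+f_{k+_n1}=\lfloor s/2\rfloor+\lceil s/2\rceil=s=d_k+d_{k+_n1}$, so the two pairs share a common sum; this simultaneously reconfirms that $Def$ lands in $m\mathbf{D}_N^n$ and that $f_k,f_{k+_n1}\geq 1$ since $s\geq 2$. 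With both pairs summing to $s$, I would record the elementary lemma that nonnegative integers $p,q$ with $p+q=s$ satisfy $pq\leq\lfloor s/2\rfloor\lceil s/2\rceil$; the cleanest route is the identity $pq=\frac{s^2-(p-q)^2}{4}$, after which $pq$ is maximal precisely when $(p-q)^2$ is as small as the parity of $s$ allows. Applying this with $(p,q)=(d_k,d_{k+_n1})$ gives (2).

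Part (3) is then just the equality analysis of the same identity. Since $\lfloor s/2\rfloor\lceil s/2\rceil=\frac{s^2-\varepsilon}{4}$ with $\varepsilon=0$ for even $s$ and $\varepsilon=1$ for odd $s$, and $(p-q)^2\geq\varepsilon$ with the admissible minimum forced by the parity of $s$, equality $pq=\lfloor s/2\rfloor\lceil s/2\rceil$ holds if and only if $(p-q)^2=\varepsilon$, which in both parities is equivalent to $|p-q|\leq 1$. Substituting $p=d_k$, $q=d_{k+_n1}$ yields the stated criterion $|d_k-d_{k+_n1}|\leq 1$.

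Finally, part (1) follows from (2) by a diagram chase: the commutative diagram (3.6) gives $\mu_{m\mathbf{R}}=\mu_{m\mathbf{D}}\circ\Delta$, and Proposition 1.1 gives $\Delta\circ Ref=Def\circ\Delta$, whence
\[
\mu_{m\mathbf{R}}(Ref(k,\mathbf{a}))=\mu_{m\mathbf{D}}(Def(\Delta(k,\mathbf{a})))\geq\mu_{m\mathbf{D}}(\Delta(k,\mathbf{a}))=\mu_{m\mathbf{R}}(k,\mathbf{a}),
\]
the middle step being (2) applied to $\Delta(k,\mathbf{a})\in m\mathbf{D}_N^n$. The only part that is not purely formal is the verification that $d_k+d_{k+_n1}<N$, so that $s$ carries no modular reduction; I expect this to be the main obstacle, and I would stress that it is precisely the place where $n\geq 3$ is indispensable (note also that $k\neq k+_n1$ for $n\geq 2$, so the two modified positions are genuinely distinct). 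Once the no-wraparound fact is in place, the remainder is the classical extremal property of balanced products together with a routine diagram chase.
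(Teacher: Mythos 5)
Your proposal is correct and follows essentially the same route as the paper: part (1) is deduced formally from part (2) (the paper cites its Proposition 2.2 for this step, while you unwind the same computation explicitly through the diagrams $\Delta\circ Ref=Def\circ\Delta$ and $\mu_{m\mathbf{R}}=\mu_{m\mathbf{D}}\circ\Delta$), and parts (2)--(3) are reduced, after cancelling the unchanged positive factors, to the extremal property of balanced products $pq\leq\lfloor s/2\rfloor\lceil s/2\rceil$ for $p+q=s$, which is exactly the paper's Lemma 3.1.

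Two minor points where your write-up is tighter than the paper's. First, you prove the key lemma uniformly from the identity $pq=\frac{s^2-(p-q)^2}{4}$ together with $\lfloor s/2\rfloor\lceil s/2\rceil=\frac{s^2-\varepsilon}{4}$, whereas the paper splits into parity cases and handles odd $s$ by a separate quadratic-maximization argument; your version also makes the equality analysis in (3) immediate. Second, you explicitly verify that the modular sum $d_k+_Nd_{k+_n1}$ appearing in the definition of $Def$ coincides with the integer sum $d_k+d_{k+_n1}$, using $n\geq 3$ and the positivity of the remaining entries to rule out wraparound; the paper's reduction to Lemma 3.1 (which is stated for ordinary integer sums) silently assumes this, so your check fills a small gap in the published argument rather than deviating from it.
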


\begin{proof}
By Proposition 2.2, the item (1) follows from the item (2). Hence we have only to prove the items (2) and (3). Furthermore the deformation map changes only an adjacent pair of the entries, we are reduced to showing the following:

\begin{lem}
For any pair $(a,b)$ of positive integers, we have
\begin{eqnarray}
a\cdot b\leq \left\lfloor\frac{a+b}{2}\right\rfloor\cdot \left\lceil\frac{a+b}{2}\right\rceil.
\end{eqnarray}
The equality holds if and only if $|a-b|\leq 1$.
\end{lem}

\noindent
We may assume that $a\leq b$. When $a=b$, both sides of (3.8) are equal to $a^2$, and hence the equality holds. When $b=a+1$, the right hand side of (3.8) is equal to $a\cdot (a+1)$, and hence the equality holds too. Suppose that $b> a+1$. In case $a\equiv b\pmod 2$, the right hand side of (3.8) is equal to $\displaystyle{\left(\frac{a+b}{2}\right)^2}$, and hence the difference (RHS)-(LHS) becomes $((a+b)^2-4ab)/4=(a-b)^2/4\geq 0$, which implies the validity of (3.8). In case $a\not\equiv b\pmod 2$, let $a+b=2k +1$ for an integer $k$. Then the right hand side is equal to $k(k+1)$. On the other hand, the left hand side becomes $a((2k+1)-a)$. Note that the quadratic function $x((2k+1)-x)$ on $\mathbf{Z}$ takes the maximum value at $x=k, k+1$. Since we are in the case that $b>a+1$, $a$ cannot equal to $k$ and hence the product $ab=a((2k+1)-a)$ is smaller than $k(k+1)$. This completes the proof.
\end{proof}

Combining Proposition 3.1 and 3.3, we arrive at the following objective of this section.

\begin{prp}
{\rm (1)} $\mu_{m\mathbf{R}}$ is a monotone invariant measure on the dynamical system $\mathbf{Ref}_N^n$ with respect to the automorphism $mtr$ on $\mathbf{Ref}_N^n$.\\
{\rm (2)} $\mu_{m\mathbf{D}}$ is a monotone measure on the dynamical system $\mathbf{Def}_N^n$.
\end{prp}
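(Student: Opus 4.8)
The plan is to recognize that Proposition 3.4 is purely an assembly of results already established earlier in the excerpt, so the proof requires almost no new computation. The two claims to verify are: (1) $\mu_{m\mathbf{R}}$ is a monotone invariant measure on $\mathbf{Ref}_N^n$ with respect to $mtr$, and (2) $\mu_{m\mathbf{D}}$ is a monotone measure on $\mathbf{Def}_N^n$. I would handle each by citing the appropriate ingredient and checking that every hypothesis of the relevant general proposition is met.

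For part (1), recall that ``monotone invariant measure'' requires two separate properties from Definition 2.1: invariance under the automorphism (condition (2.2)) and monotonicity under the dynamics (condition (2.3)). The invariance $\mu_{m\mathbf{R}}\circ mtr=\mu_{m\mathbf{R}}$ is exactly equation (3.4), which was derived from (3.2) and (3.3); combined with the fact (noted at the start of Section 3) that $mtr$ is an automorphism of $\mathbf{Ref}_N^n$ via (1.7), this gives the invariant-measure assertion already recorded as Proposition 3.1. The monotonicity $\mu_{m\mathbf{R}}(Ref(k,\mbox{\boldmath $a$}))\geq\mu_{m\mathbf{R}}(k,\mbox{\boldmath $a$})$ is precisely Proposition 3.3 (1). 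So part (1) follows by simply invoking Proposition 3.1 together with Proposition 3.3 (1).

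For part (2), I would observe that monotonicity of $\mu_{m\mathbf{D}}$ on $\mathbf{Def}_N^n$ is the content of the inequality (3.7), which is exactly Proposition 3.3 (2). Alternatively, and more in the spirit of the paper's axiomatic framework, one can deduce it from part (1) via Proposition 2.2: since $\mu_{m\mathbf{D}}$ is identified with $\overline{\mu_{m\mathbf{R}}}$ through the commutative diagram (3.6) (so that $\mu_{m\mathbf{D}}$ plays the role of the descended measure $\overline{\mu}$ and $Def$ the role of $\overline{F}$), Proposition 2.2 asserts that $\mu_{m\mathbf{R}}$ is monotone on $\mathbf{Ref}_N^n$ if and only if $\mu_{m\mathbf{D}}$ is monotone on $\mathbf{Def}_N^n$. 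Either route closes part (2).

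I do not expect any genuine obstacle here, since all the analytic work was already discharged in Proposition 3.3, whose proof reduced the monotonicity to the elementary inequality $ab\leq\lfloor\frac{a+b}{2}\rfloor\lceil\frac{a+b}{2}\rceil$ of Lemma 3.1. The only point requiring care is bookkeeping: I must state clearly which part of Definition 2.1 each cited result supplies, and make sure the identification of $\mu_{m\mathbf{D}}$ with $\overline{\mu_{m\mathbf{R}}}$ (and of $Def$ with the descended map $\overline{Ref}$) is legitimate, which is guaranteed by Proposition 1.1 together with the diagram (3.6). Thus the proof is essentially a one-line combination of Propositions 3.1 and 3.3, and the statement of Proposition 3.4 is best viewed as a summary consolidating the section's results.
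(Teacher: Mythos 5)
Your proposal is correct and matches the paper's own treatment exactly: the paper presents Proposition 3.4 as an immediate consequence of combining Proposition 3.1 (invariance of $\mu_{m\mathbf{R}}$ with respect to $mtr$) with Proposition 3.3 (monotonicity under $Ref$ and $Def$), which is precisely your assembly. Your bookkeeping of which part of Definition 2.1 each cited result supplies, and the optional appeal to Proposition 2.2 via the identification $\mu_{m\mathbf{D}}=\overline{\mu_{m\mathbf{R}}}$, is consistent with how the paper itself organizes these facts.
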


\section{Main theorem and its proof}
In this section we prove our main theorem which characterizes quasi-smooth rhythms in $m\mathbf{R}_N^n$. For our proof it is indispensable to focus on the structure of the set $St(\mu_{m\mathbf{D}})$ of $\mu_{m\mathbf{D}}$-stable points in the dynamical system $\mathbf{Def}_N^n=(m\mathbf{D}_N^n,Def)$. The following property is fundamental.

\begin{prp}
Assume that $\mbox{\boldmath $D$}=(k,\mbox{\boldmath $d$})$ is $(\mu_{m\mathbf{D}},Def)$-invariant, where $\mbox{\boldmath $d$}=(d_0,\cdots,d_{n-1})$. Then we have\\
$(1)$ $|d_k-d_{k+_n1}|\leq 1$.\\
$(2.1)$ When $d_{k+_n1}=d_k+1$, we have 
\begin{eqnarray}
Def(k,\mbox{\boldmath $d$})=(k+_n1,\mbox{\boldmath $d$}).
\end{eqnarray} 
$(2.2)$ When $d_{k+_n1}=d_k-1$ or $d_{k+_n1}=d_k$, we have 
\begin{eqnarray}
Def(k,\mbox{\boldmath $d$})=(k+_n1,ad_k(\mbox{\boldmath $d$})),
\end{eqnarray} 
where $ad_k:\mathbf{D}_N^n\rightarrow \mathbf{D}_N^n$ denotes the map which transposes the $k$-th and the $(k+_n1)$-th coordinates.
\end{prp}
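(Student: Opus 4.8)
The plan is to observe that the hypothesis that $\mbox{\boldmath $D$}=(k,\mbox{\boldmath $d$})$ is $(\mu_{m\mathbf{D}},Def)$-invariant means precisely, by Definition 2.4 (1), that $\mu_{m\mathbf{D}}(Def(k,\mbox{\boldmath $d$}))=\mu_{m\mathbf{D}}(k,\mbox{\boldmath $d$})$, i.e. that equality holds in (3.7). Hence part (1), the inequality $|d_k-d_{k+_n1}|\leq 1$, is nothing but the equality condition already recorded in Proposition 3.3 (3), and I would invoke it directly with no further argument. For parts (2.1) and (2.2) I would then evaluate $Def(k,\mbox{\boldmath $d$})$ explicitly under the additional knowledge that $d_k$ and $d_{k+_n1}$ differ by at most one.

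The one point that needs care before any such explicit evaluation is the meaning of the modular sum $d_k+_Nd_{k+_n1}$ appearing in Definition 1.4. I would first establish that no wrapping occurs, i.e. that $d_k+_Nd_{k+_n1}=d_k+d_{k+_n1}$ as ordinary integers. This follows from the defining constraint of $\mathbf{D}_N^n$: since $\sum_{i=0}^{n-1}d_i=N$ and each $d_i\geq 1$, the two entries $d_k,d_{k+_n1}$, which are genuinely distinct coordinates because $n\geq 3$ forces $k\neq k+_n1$, satisfy $d_k+d_{k+_n1}=N-\sum_{i\notin\{k,k+_n1\}}d_i\leq N-(n-2)<N$. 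Thus $R_N(d_k+d_{k+_n1})=d_k+d_{k+_n1}$, and $(f_k,f_{k+_n1})=(\lfloor\frac{s}{2}\rfloor,\lceil\frac{s}{2}\rceil)$ with $s=d_k+d_{k+_n1}$ the honest integer sum.

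With this in hand I would split into the three cases allowed by part (1). If $d_{k+_n1}=d_k+1$ then $s=2d_k+1$ is odd, so $f_k=\lfloor\frac{s}{2}\rfloor=d_k$ and $f_{k+_n1}=\lceil\frac{s}{2}\rceil=d_k+1=d_{k+_n1}$; hence the two affected entries are returned unchanged and $Def(k,\mbox{\boldmath $d$})=(k+_n1,\mbox{\boldmath $d$})$, which is (4.1). If $d_{k+_n1}=d_k-1$ then $s=2d_k-1$ is odd, giving $f_k=d_k-1=d_{k+_n1}$ and $f_{k+_n1}=d_k$, so the $k$-th and $(k+_n1)$-th entries are interchanged, i.e. $Def(k,\mbox{\boldmath $d$})=(k+_n1,ad_k(\mbox{\boldmath $d$}))$. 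If $d_{k+_n1}=d_k$ then $s=2d_k$ is even and $f_k=f_{k+_n1}=d_k$, so again the pair is unchanged; but since $d_k=d_{k+_n1}$ the transposition $ad_k$ fixes $\mbox{\boldmath $d$}$, whence the output equals both $(k+_n1,\mbox{\boldmath $d$})$ and $(k+_n1,ad_k(\mbox{\boldmath $d$}))$, matching (4.2). In each case only the coordinates indexed by $k$ and $k+_n1$ are touched, so reading off $\mbox{\boldmath $e$}$ from Definition 1.4 completes the verification.

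The only genuine obstacle is the no-wrapping observation of the second paragraph: without it the modular reduction in $d_k+_Nd_{k+_n1}$ could in principle change the parity and size of $s$, invalidating both the appeal to Proposition 3.3 (3) and the floor/ceil computations. Once that observation is secured, and it rests entirely on the constraint $\sum_i d_i=N$ together with $n\geq 3$, the remainder is a routine three-case evaluation of $\lfloor\frac{s}{2}\rfloor$ and $\lceil\frac{s}{2}\rceil$.
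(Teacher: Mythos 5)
Your proposal is correct and follows essentially the same route as the paper's own proof: part (1) is read off from the equality case of Proposition 3.3 (3), and parts (2.1)--(2.2) are the same three-case evaluation of $\lfloor s/2\rfloor$ and $\lceil s/2\rceil$ with $s=2d_k+1$, $2d_k-1$, $2d_k$ respectively. The one thing you add is the explicit verification that $d_k+_Nd_{k+_n1}$ equals the ordinary integer sum (using $\sum_i d_i=N$, $d_i\geq 1$, $n\geq 3$); the paper silently treats the modular sum as the integer sum, so this small observation is a worthwhile tightening rather than a different approach.
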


\begin{proof}
Let $Def(k,\mbox{\boldmath $d$})=(k+_n1,\mbox{\boldmath $e$})$ with $\mbox{\boldmath $e$}=(e_0,\cdots,e_{n-1})$. The item (1) is already proved in Proposition 3.3 (3). When $d_{k+_n1}=d_k+1$, we have $(e_k,e_{k+_n1})=(d_k,d_{k+_n1})$ by the definition of the map $Def$, and hence (4.1) follows. When $d_{k+_n1}=d_k-1$, we have $e_k=\lfloor\frac{2d_k-1}{2}\rfloor=d_k-1=d_{k+_n1}$, and $e_{k+_n1}=\lceil\frac{2d_k-1}{2}\rceil=d_k$, and hence the equality (4.2) holds. Furthermore when $d_{k+_n1}=d_k$, the equality (4.2) holds trivially. This completes the proof.
\end{proof}

\noindent
For ease of description, we introduce the following notation.

\begin{df}
Let $\mbox{\boldmath $D$}=(k,\mbox{\boldmath $d$})\in\mathbf{D}_N^n$ with $\mbox{\boldmath $d$}=(d_0,\cdots,d_{n-1})$.\\
{\rm (1)} For any positive integer $\ell$, we express $Def^{\ell}(\mbox{\boldmath $D$})$ as
\begin{eqnarray*}
\mbox{\boldmath $D$}^{(\ell)}=(k+_n\ell,\mbox{\boldmath $d$}^{(\ell)})=(k+_n\ell,(d_0^{(\ell)},\cdots,d_{n-1}^{(\ell)})).
\end{eqnarray*}
{\rm (2)} The multiset of all the entries of $\mbox{\boldmath $d$}$ is called the {\it content} of $\mbox{\boldmath $D$}$, and is denoted by $Cont(\mbox{\boldmath $D$})$. 
\end{df}

\noindent
As a direct consequence of Proposition 4.1, we obtain the following.

\begin{cor}
If $\mbox{\boldmath $D$}\in St(\mu_{m\mathbf{D}})$, then for any positive integer $\ell$ we have
\begin{eqnarray*}
Cont(\mbox{\boldmath $D$}^{(\ell)})=Cont(\mbox{\boldmath $D$}).
\end{eqnarray*}
\end{cor}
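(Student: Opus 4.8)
The plan is to deduce the corollary from a single application of Proposition 4.1 at each point of the forward orbit of $\mbox{\boldmath $D$}$, once one knows that $\mu_{m\mathbf{D}}$-stability propagates along that orbit.

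First I would unwind the definition of stability. By Definition 2.5 (2), the hypothesis $\mbox{\boldmath $D$}\in St(\mu_{m\mathbf{D}})$ means that $\mbox{\boldmath $D$}$ is $(\mu_{m\mathbf{D}}, Def^{\ell})$-invariant for every positive integer $\ell$, i.e. $\mu_{m\mathbf{D}}(Def^{\ell}(\mbox{\boldmath $D$}))=\mu_{m\mathbf{D}}(\mbox{\boldmath $D$})$ for all $\ell\geq 1$. The key intermediate claim I would establish is that $St(\mu_{m\mathbf{D}})$ is closed under $Def$: if $\mbox{\boldmath $D$}\in St(\mu_{m\mathbf{D}})$ then $Def(\mbox{\boldmath $D$})\in St(\mu_{m\mathbf{D}})$. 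Indeed, for any $\ell\geq 1$ the stability of $\mbox{\boldmath $D$}$ gives $\mu_{m\mathbf{D}}(Def^{\ell}(Def(\mbox{\boldmath $D$})))=\mu_{m\mathbf{D}}(Def^{\ell+1}(\mbox{\boldmath $D$}))=\mu_{m\mathbf{D}}(\mbox{\boldmath $D$})=\mu_{m\mathbf{D}}(Def(\mbox{\boldmath $D$}))$, which is precisely $(\mu_{m\mathbf{D}},Def^{\ell})$-invariance of $Def(\mbox{\boldmath $D$})$. Iterating this closure by induction yields $\mbox{\boldmath $D$}^{(\ell)}\in St(\mu_{m\mathbf{D}})$ for every $\ell\geq 0$; in particular each iterate $\mbox{\boldmath $D$}^{(\ell)}$ is $(\mu_{m\mathbf{D}}, Def)$-invariant, so Proposition 4.1 applies to it.

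Next I would read off the content behaviour one step at a time. Applying Proposition 4.1 to the $(\mu_{m\mathbf{D}}, Def)$-invariant point $\mbox{\boldmath $D$}^{(\ell)}=(k+_n\ell,\mbox{\boldmath $d$}^{(\ell)})$, in each of its three cases the difference part of $Def(\mbox{\boldmath $D$}^{(\ell)})=\mbox{\boldmath $D$}^{(\ell+1)}$ is obtained from $\mbox{\boldmath $d$}^{(\ell)}$ either by leaving it unchanged (case (2.1)) or by applying the coordinate transposition $ad_{k+_n\ell}$ (case (2.2)). Since a transposition of two coordinates does not alter the underlying multiset of entries, we obtain $Cont(\mbox{\boldmath $D$}^{(\ell+1)})=Cont(\mbox{\boldmath $D$}^{(\ell)})$ for every $\ell\geq 0$. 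Chaining these equalities from $\ell=0$ and using $\mbox{\boldmath $D$}^{(0)}=\mbox{\boldmath $D$}$ gives $Cont(\mbox{\boldmath $D$}^{(\ell)})=Cont(\mbox{\boldmath $D$})$ for all $\ell$, which is the assertion.

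The only step that I expect to require genuine care is the closure claim that stability is preserved by $Def$. Proposition 4.1 describes only a single step of $Def$ at a point already assumed to be one-step invariant, so to iterate it I must first guarantee that invariance survives each successive application of $Def$; this is exactly where the full strength of $\mu_{m\mathbf{D}}$-stability (invariance under all powers $Def^{\ell}$, not merely under $Def$ itself) is used, via the telescoping equality above. Once this closure is secured, the content-preservation is immediate from the transposition description, and the corollary follows.
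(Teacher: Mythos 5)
Your proof is correct and follows exactly the argument the paper intends when it states the corollary as a ``direct consequence'' of Proposition 4.1: each step of $Def$ acts on the difference part as the identity or as an adjacent transposition, hence preserves the content. Your explicit verification that $\mu_{m\mathbf{D}}$-stability propagates along the orbit (so that Proposition 4.1 can legitimately be applied to every iterate $\mbox{\boldmath $D$}^{(\ell)}$, not just to $\mbox{\boldmath $D$}$ itself) is precisely the bookkeeping the paper leaves implicit, and it is handled correctly.
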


Now we can prove the following result which plays a crucial role for our characterization of the quasi-smoothness. Recall that the difference of the maximum and the minimum of the entries of $\mbox{\boldmath $d$}\in\mathbf{D}_N^n$ is called the {\it width} of $\mbox{\boldmath $d$}$, and is denoted by $w(\mbox{\boldmath $d$})$. For any $\mbox{\boldmath $D$}=(k,\mbox{\boldmath $d$})\in m\mathbf{D}_N^n$ we set $w(\mbox{\boldmath $D$})=w(\mbox{\boldmath $d$})$, and call it the width of $\mbox{\boldmath $D$}$ too.

\begin{prp}
If $\mbox{\boldmath $D$}\in St(\mu_{m\mathbf{D}})$, then we have
\begin{eqnarray*}
w(\mbox{\boldmath $D$})\leq 1.
\end{eqnarray*}
\end{prp}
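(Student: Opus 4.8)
The plan is to exploit the very explicit description of $Def$ on $(\mu_{m\mathbf{D}},Def)$-invariant points given by Proposition 4.1, together with the fact that $\mu_{m\mathbf{D}}$-stability forces \emph{every} iterate to be such an invariant point. First I would observe that if $\mbox{\boldmath $D$}\in St(\mu_{m\mathbf{D}})$, then $\mu_{m\mathbf{D}}$ is constant along the whole forward orbit: by Definition 2.2, $\mbox{\boldmath $D$}$ is $(\mu_{m\mathbf{D}},Def^{\ell})$-invariant for every $\ell$, so $\mu_{m\mathbf{D}}(\mbox{\boldmath $D$}^{(\ell)})=\mu_{m\mathbf{D}}(\mbox{\boldmath $D$})$ for all $\ell$, and hence each $\mbox{\boldmath $D$}^{(\ell)}$ is itself $(\mu_{m\mathbf{D}},Def)$-invariant. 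Consequently Proposition 4.1 applies at every step: writing $k_{\ell}=k+_n\ell$ for the marker of $\mbox{\boldmath $D$}^{(\ell)}$, either $d^{(\ell)}_{k_{\ell}+_n1}=d^{(\ell)}_{k_{\ell}}+1$ and $Def$ leaves the difference part unchanged (case $(2.1)$), or $d^{(\ell)}_{k_{\ell}+_n1}\le d^{(\ell)}_{k_{\ell}}$ and $Def$ merely transposes the entries in positions $k_{\ell}$ and $k_{\ell}+_n1$ (case $(2.2)$); in all cases $|d^{(\ell)}_{k_{\ell}}-d^{(\ell)}_{k_{\ell}+_n1}|\le1$.

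The central idea is to track the \emph{marked entry} $v_{\ell}:=d^{(\ell)}_{k_{\ell}}$. By Proposition 4.1, after the marker advances to $k_{\ell}+_n1$ the new marked entry is either $v_{\ell}+1$ (case $(2.1)$) or exactly $v_{\ell}$ (case $(2.2)$, since the transposition carries the old marked value forward). Hence $(v_{\ell})_{\ell}$ is non-decreasing; as all entries of elements of $\mathbf{D}_N^n$ lie in $\{1,\dots,N-1\}$, the sequence is bounded and therefore eventually constant, say $v_{\ell}=V$ for all $\ell\ge\ell_0$. For $\ell\ge\ell_0$ case $(2.1)$ can no longer occur (it would strictly increase $v_{\ell}$), so every step is a transposition with $d^{(\ell)}_{k_{\ell}+_n1}\le V$; combined with $|V-d^{(\ell)}_{k_{\ell}+_n1}|\le1$ this pins $d^{(\ell)}_{k_{\ell}+_n1}\in\{V-1,V\}$, while $d^{(\ell)}_{k_{\ell}}=V$. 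Thus for $\ell\ge\ell_0$ the only positions altered by $Def$ are $k_{\ell}$ and $k_{\ell}+_n1$, and both hold values in $\{V-1,V\}$.

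The conclusion follows from a freezing argument. Suppose some entry of $\mbox{\boldmath $d$}^{(\ell_0)}$ equals a value $s\notin\{V-1,V\}$, located at position $q$. Since for each $\ell\ge\ell_0$ only positions $k_{\ell},k_{\ell}+_n1$ change and these carry values in $\{V-1,V\}$, the position $q$ is never touched, so by induction $s$ remains at $q$ and $q\ne k_{\ell}+_n1$ for all $\ell\ge\ell_0$. But as $\ell$ ranges over $\ell\ge\ell_0$ the index $k_{\ell}+_n1$ runs through all of $\mathbf{Z}_n$, so $q=k_{\ell}+_n1$ for some such $\ell$, a contradiction. Hence every entry of $\mbox{\boldmath $d$}^{(\ell_0)}$ lies in $\{V-1,V\}$, giving $w(\mbox{\boldmath $D$}^{(\ell_0)})\le1$; and since $Cont(\mbox{\boldmath $D$}^{(\ell_0)})=Cont(\mbox{\boldmath $D$})$ by Corollary 4.1, the maximum and minimum entries are unchanged, so $w(\mbox{\boldmath $D$})\le1$.

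I expect the main obstacle to be \emph{identifying} the right monotone quantity rather than any individual computation: the key realization is that the marked entry $v_{\ell}$ is non-decreasing along the orbit, the largest attainable value being repeatedly carried forward by the marker through transpositions. Once this is in hand, the stabilization of $v_{\ell}$ and the freezing argument, which crucially uses that the marker cyclically visits every position of $\mathbf{Z}_n$, are routine. A secondary point to keep straight is the bookkeeping of the modular indices $k_{\ell}=k+_n\ell$; I note that $d^{(\ell)}_{k_{\ell}}+_Nd^{(\ell)}_{k_{\ell}+_n1}$ never wraps modulo $N$ (because $n\ge3$ forces at least one further positive entry, so the pairwise sum is at most $N-1$), which is precisely what makes the floor/ceiling splitting in $Def$ behave like ordinary integer arithmetic and underlies Proposition 4.1.
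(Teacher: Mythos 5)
Your proposal is correct and takes essentially the same route as the paper's own proof: both use stability to invoke Proposition 4.1 at every step, track the marked entry as it is carried forward by the transpositions of case $(2.2)$ until it dominates its neighbors, conclude that a full sweep of the marker forces every entry into a two-element set $\{V-1,V\}$, and then transfer this back to $\mbox{\boldmath $D$}$ via Corollary 4.1. The only difference is cosmetic: the paper identifies the eventual marked value explicitly as the maximum $M$ (attained once the marker reaches the first maximal entry, with an explicit bound $i_m+n-1$ on the number of steps), whereas you obtain it non-effectively by the monotone-bounded stabilization of $v_{\ell}$ followed by your freezing contradiction.
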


\begin{proof}
We may assume that the marker of $\mbox{\boldmath $D$}$ is equal to 0, and hence we set $\mbox{\boldmath $D$}=(0,\mbox{\boldmath $d$})$ with $\mbox{\boldmath $d$}=(d_0,\cdots,d_{n-1})$. Let $M=\max\{d_i|i\in\mathbf{Z}_n\}$ and let $i_m=\min\{i|d_i=M\}$, the first index of the entry which attains the maximum $M$. Then it follows from Proposition 4.1 (2.2) that, for any $\ell\geq i_m$, the marked entry of $\mbox{\boldmath $D$}^{(\ell)}$ is equal to $M$, and the subsequent entry belongs to $\{M, M-1\}$. Applying $Def$ repeatedly, the same argument shows that $Cont(\mbox{\boldmath $D$}^{(i_m+n-1)})\subset \{M,M-1\}$. Therefore it follows from Corollary 4.1 that $Cont(\mbox{\boldmath $D$})\subset \{M,M-1\}$, and hence we have $w(\mbox{\boldmath $D$})\leq 1$. This completes the proof.
\end{proof}

In order to characterize the set $Per(\mathbf{Def}_N^n)$ in $m\mathbf{D}_N^n$, we need to specify the value which the marked entry takes.

\begin{df}
For any $\mbox{\boldmath $D$}\in m\mathbf{D}_N^n$, we denote the maximum of its difference part by $\max(\mbox{\boldmath $D$})$. We say that $\mbox{\boldmath $D$}$ is {\rm max-marked}, if the value of the marked entry is equal to $\max(\mbox{\boldmath $D$})$. 
\end{df}

The following lemma enables us to characterize the subset $Per(\mathbf{Def}_N^n)$ in $m\mathbf{D}_N^n$.

\begin{lem} 
Suppose that $\mbox{\boldmath $D$}\in m\mathbf{D}_N^n$ has width $\leq 1$. \\
$(1)$ If it is max-marked, then for any positive integer $\ell$, the $\ell$-th deformation $\mbox{\boldmath $D$}^{(\ell)}$ is max-marked.\\
$(2)$ If it is \underline{not} max-marked, it is not periodic under the deformation map $Def$.
\end{lem}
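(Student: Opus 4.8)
The plan is to use the width-$\le 1$ hypothesis to collapse $Def$ into an essentially trivial local rule, and then to read off both statements from a short case table. Write $M=\max(\mbox{\boldmath $D$})$; the hypothesis means every entry of $\mbox{\boldmath $d$}$ lies in $\{M-1,M\}$. A preliminary observation is that no modular wrap-around occurs in $d_k+_Nd_{k+_n1}$: since $n\ge 3$ and the remaining $n-2$ entries are positive integers, $d_k+d_{k+_n1}\le N-(n-2)<N$, so $d_k+_Nd_{k+_n1}$ is the ordinary integer sum. Hence I can compute $Def$ on the pair $(d_k,d_{k+_n1})$ directly, obtaining $(M,M)\mapsto(M,M)$, $(M-1,M-1)\mapsto(M-1,M-1)$, $(M-1,M)\mapsto(M-1,M)$, and $(M,M-1)\mapsto(M-1,M)$. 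Two features drive everything: the multiset $\{d_k,d_{k+_n1}\}$ is preserved in every case, so $Def$ preserves $Cont(\mbox{\boldmath $D$})$ and therefore the maximum $M$; and the entry landing on the new marked position $k+_n1$ is always $\max\{d_k,d_{k+_n1}\}$.

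For part $(1)$ I would simply specialize this table. If $\mbox{\boldmath $D$}$ is max-marked then $d_k=M$, so the pair is $(M,M)$ or $(M,M-1)$, and in both the new marked entry equals $M$. Since $M$ stays the maximum of the unchanged content, $\mbox{\boldmath $D$}^{(1)}$ is again max-marked, and induction on $\ell$ yields the claim for all positive $\ell$.

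For part $(2)$ the idea is to show that a non-max-marked point is forced to become max-marked after boundedly many steps, and then to trap it there by part $(1)$. When the width equals $1$ both values $M$ and $M-1$ are attained (if the width were $0$ every point would be max-marked, so that case cannot arise), and non-max-marked means $d_k=M-1$. The table shows that while the marked entry is $M-1$ the map leaves the whole tuple $\mbox{\boldmath $d$}$ fixed and merely advances the marker by one; the first time the entry just after the marker equals $M$, the next application moves the marker onto it, producing a max-marked point. Because at least one entry equals $M$ and the frozen tuple keeps that $M$ in place, this occurs within at most $n$ applications of $Def$. By part $(1)$ the orbit stays max-marked thereafter, so if $\mbox{\boldmath $D$}$ were periodic the equality $\mbox{\boldmath $D$}^{(\ell)}=\mbox{\boldmath $D$}$ would hold for arbitrarily large $\ell$, contradicting that $\mbox{\boldmath $D$}^{(\ell)}$ is max-marked, hence distinct from the non-max-marked point $\mbox{\boldmath $D$}$, once $\ell$ is large. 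Thus $\mbox{\boldmath $D$}\notin Per(\mathbf{Def}_N^n)$.

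The only genuinely delicate point, and the one place the argument could slip, is the bookkeeping in part $(2)$: I must verify that advancing the marker across an initial block of $(M-1)$-entries really leaves the tuple untouched, so that the guaranteed occurrence of $M$ somewhere in the content is still present when the marker arrives, and that cyclic motion cannot forever dodge every $M$. Both facts follow from the content-preservation observation together with the mere existence of an entry equal to $M$, so the task is one of careful phrasing rather than of any nontrivial estimate.
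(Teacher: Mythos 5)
Your proof is correct and follows essentially the same route as the paper: a case analysis of $Def$ on the adjacent pair $(d_k,d_{k+_n1})\in\{M-1,M\}^2$, induction for part (1), and for part (2) advancing the marker through the initial block of $(M-1)$-entries (which leaves the tuple frozen) until it lands on an entry equal to $M$, after which part (1) traps the orbit among max-marked points, contradicting periodicity. The only cosmetic difference is that you recompute the local rule directly from Definition 1.4 --- including the explicit check that $d_k+_Nd_{k+_n1}$ involves no modular wrap-around, a point the paper leaves implicit --- whereas the paper cites the same case table from Proposition 4.1.
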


\begin{proof}
(1) By the assumption that $\mbox{\boldmath $D$}$ is max-marked, the alternative (2.2) in Proposition 4.1 holds, and as a result $\mbox{\boldmath $D$}^{(1)}$ is max-marked. Repeating this argument we see that every $\mbox{\boldmath $D$}^{(\ell)}$ is max-marked for any positive integer $\ell$. \\
(2) Let $\mbox{\boldmath $D$}=(k, \mbox{\boldmath $d$})$ with $\mbox{\boldmath $d$}=(d_0,\cdots,d_{n-1})$, and $M=max(\mbox{\boldmath $D$})$. It follows from the assumption that $d_k=M-1$. Hence there exists an index $\ell\in\mathbf{Z}_n$ such that $d_{\ell}=M$ and that $d_m=M-1$ for every $m\in [k,\ell-_n1]$. Then it follows from Proposition 4.1 that $\mbox{\boldmath $D$}^{(\ell-_nk)}$ is max-marked. This implies by (1) that $\mbox{\boldmath $D$}^{m)}$ is max-marked for any $m\geq \ell-_nk$, and hence the original $\mbox{\boldmath $D$}$, which is not max-marked, cannot belong to a cycle. This completes the proof.
\end{proof}

\begin{thm}
${\rm (A)}$ A marked difference $\mbox{\boldmath $D$}\in m\mathbf{D}_N^n$ belongs to $Per(\mathbf{Def}_N^n)$ if and only if it satisfies the following two conditions.\\
$(A.1)$ $w(\mbox{\boldmath $D$})\leq 1$.\\
$(A.2)$ It is max-marked.\\
${\rm (B)}$ A marked rhythm $\mbox{\boldmath $A$}\in m\mathbf{R}_N^n$ is quasi-smooth if and only if its difference $\Delta(\mbox{\boldmath $A$})$ satisfies the above two conditions $(A.1)$ and $(A.2)$.
\end{thm}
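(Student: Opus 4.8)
The plan is to prove part (A) first and then derive part (B) as a corollary using the already-established machinery. For part (A), I would argue both directions through the set $St(\mu_{m\mathbf{D}})$ of $\mu_{m\mathbf{D}}$-stable points, which sits between $Per(\mathbf{Def}_N^n)$ and the width/max-marked conditions. First, for the necessity direction, suppose $\mbox{\boldmath $D$}\in Per(\mathbf{Def}_N^n)$. By Corollary 2.1 applied to the monotone measure $\mu_{m\mathbf{D}}$ on $\mathbf{Def}_N^n$ (Proposition 3.4(2)), periodicity implies $\mbox{\boldmath $D$}\in St(\mu_{m\mathbf{D}})$. Then Proposition 4.2 immediately yields condition $(A.1)$, that $w(\mbox{\boldmath $D$})\leq 1$. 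For condition $(A.2)$, I would invoke the contrapositive of Lemma 4.1(2): a width-$\leq 1$ marked difference that is periodic cannot fail to be max-marked, so $\mbox{\boldmath $D$}$ must be max-marked.

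For the sufficiency direction of part (A), suppose $\mbox{\boldmath $D$}=(k,\mbox{\boldmath $d$})$ satisfies $(A.1)$ and $(A.2)$. The idea is to show the $Def$-orbit of $\mbox{\boldmath $D$}$ is finite and returns to $\mbox{\boldmath $D$}$. By Lemma 4.1(1), every iterate $\mbox{\boldmath $D$}^{(\ell)}$ stays max-marked, so at each step the relevant alternative is $(2.2)$ of Proposition 4.1, meaning $Def$ acts by advancing the marker and transposing the marked coordinate with its successor via $ad_k$. I would track this explicitly: since the width is $\leq 1$, every entry lies in $\{M,M-1\}$ where $M=\max(\mbox{\boldmath $D$})$, and each application of $Def$ swaps the marked $M$ forward by one position while incrementing the marker. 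After $n$ applications the marker returns to $k$, and I would verify that the content is preserved (Corollary 4.1) and that the cyclic arrangement of the entries returns to its original configuration, giving $Def^{n}(\mbox{\boldmath $D$})=\mbox{\boldmath $D$}$ or at worst a divisor period; hence $\mbox{\boldmath $D$}\in Per(\mathbf{Def}_N^n)$. The main obstacle I anticipate is precisely this bookkeeping for the return: one must confirm that the transpositions $ad_k$, composed around the full cycle of markers, restore the original tuple rather than merely its content. I expect this follows because in a width-$\leq 1$ max-marked configuration the single forward-moving maximum, when swapped successively all the way around, returns each entry to its starting slot after one full loop, but the argument must be made carefully rather than asserted.

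Finally, part (B) follows by transporting part (A) across the difference map $\Delta$. By Proposition 3.2, a marked rhythm $\mbox{\boldmath $A$}\in m\mathbf{R}_N^n$ lies in $Per(\mathbf{Ref})$ if and only if $\Delta(\mbox{\boldmath $A$})\in Per(\mathbf{Def})$. By Definition 1.7, $\mbox{\boldmath $A$}$ is quasi-smooth precisely when $\mbox{\boldmath $A$}\in Per(\mathbf{Ref}_N^n)$. Combining these with part (A) applied to $\Delta(\mbox{\boldmath $A$})$ gives that $\mbox{\boldmath $A$}$ is quasi-smooth if and only if $\Delta(\mbox{\boldmath $A$})$ satisfies $(A.1)$ and $(A.2)$, which is exactly the claim. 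This part is essentially formal once part (A) is in hand, so no real difficulty is expected there; the entire weight of the theorem rests on the sufficiency argument of part (A) and its orbit-closure bookkeeping.
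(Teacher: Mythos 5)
Your necessity argument and your part (B) coincide with the paper's proof: Corollary 2.1 plus Proposition 4.2 gives (A.1), Lemma 4.1(2) gives (A.2), and Proposition 3.2 transports part (A) across $\Delta$. The genuine gap is in the sufficiency direction, at exactly the point you flagged as ``bookkeeping'': the claimed return $Def^{n}(\mbox{\boldmath $D$})=\mbox{\boldmath $D$}$ (``or at worst a divisor period'') is false whenever the difference part attains its maximum more than once. Concretely, take $\mbox{\boldmath $D$}=(0,(2,2,1))\in m\mathbf{D}_5^3$, which is max-marked of width $1$. On the first step the marked entry is followed by another maximum, so the transposition $ad_0$ swaps two equal entries and only the marker advances; one computes $\mbox{\boldmath $D$}^{(1)}=(1,(2,2,1))$, $\mbox{\boldmath $D$}^{(2)}=(2,(2,1,2))$, $\mbox{\boldmath $D$}^{(3)}=(0,(2,1,2))\neq\mbox{\boldmath $D$}$, and the minimal period of $\mbox{\boldmath $D$}$ is $6=n(n-1)$, which is not a divisor of $n=3$. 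Your heuristic of ``the single forward-moving maximum returning to its slot after one full loop'' is valid only when the maximum is attained exactly once; when several entries equal $M$, the marked maximum does not move on steps where it is followed by another $M$, so after $n$ steps the cyclic pattern of $M$'s and $(M-1)$'s is in general a proper shift of the original.

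The paper's bookkeeping avoids this by counting $n-1$ steps instead of $n$: it shows $\mbox{\boldmath $D$}^{(n-1)}=(k+_n(n-1),\,ad_{k+_n(n-2)}\cdots ad_{k+_n1}ad_k(\mbox{\boldmath $d$}))$ and invokes the permutation identity $(k+_n(n-2)\hspace{2mm}k+_n(n-1))\cdots(k\hspace{2mm}k+_n1)=(n-1\hspace{2mm}n-2\hspace{2mm}\cdots\hspace{2mm}1\hspace{2mm}0)$, which is independent of $k$. Thus $Def^{\,n-1}$ acts on every max-marked element of width $\leq 1$ as one and the same cyclic shift of the difference part together with a shift of the marker, and since that cycle has order $n$, iterating $n$ times yields $Def^{\,n(n-1)}(\mbox{\boldmath $D$})=\mbox{\boldmath $D$}$; the example above shows this bound $n(n-1)$ can actually be attained. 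If you replace your $n$-step return claim by this $(n-1)$-step identity (or any proof that some power of $Def$ fixes $\mbox{\boldmath $D$}$), the rest of your proposal goes through; as written, however, the closing step of the sufficiency argument is not merely unverified but false.
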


\begin{proof}
(A) Only-if-part: Suppose that $\mbox{\boldmath $D$}\in Per(\mathbf{Def}_N^n)$. Since $\mu_{m\mathbf{D}}$ is a monotone measure by Proposition 3.3 (2), it follows from Corollary 2.1 that $\mbox{\boldmath $D$}$ is $\mu_{m\mathbf{D}}$-stable, which implies by Proposition 4.2 that $w(\mbox{\boldmath $D$})\leq 1$. Hence the icondition (A.1) holds. The validity of the condition (A.2) is a direct consequence of Lemma 4.1 (2).\\
If-part: Let $\mbox{\boldmath $D$}=(k, \mbox{\boldmath $d$})$ with $\mbox{\boldmath $d$}=(d_0,\cdots,d_{n-1})$. Under the conditions (A.1) and (A.2), we have
\begin{eqnarray*}
\mbox{\boldmath $D$}^{(1)}=(k+_n1,ad_k(\mbox{\boldmath $d$}))
\end{eqnarray*}
by Proposition 4.1 (2.2). Since $\mbox{\boldmath $D$}^{(1)}$ is max-marked itself, we have
\begin{eqnarray*}
\mbox{\boldmath $D$}^{(2)}=(k+_n2,ad_{k+_n1}ad_k(\mbox{\boldmath $d$})).
\end{eqnarray*}
By repeating these processes, we see that
\begin{eqnarray*}
\mbox{\boldmath $D$}^{(n-1)}=(k+_n(n-1),ad_{k+_n(n-2)}\cdots ad_{k+_n1}ad_k(\mbox{\boldmath $d$})).
\end{eqnarray*}
Notice here that, as the product of adjacent transpositions on $\mathbf{Z}_n$, we have the equality 
\begin{eqnarray*}
(n-2\hspace{2mm}n-1)\cdots (1\hspace{2mm}2)(0\hspace{2mm}1)=(n-1\hspace{2mm}n-2\hspace{2mm}\cdots\hspace{2mm}2\hspace{2mm}1 \hspace{2mm}0),
\end{eqnarray*}
and a slightly general one
\begin{eqnarray*}
&&(k+_n(n-2)\hspace{2mm}k+_n(n-1))\cdots (k+_n1\hspace{2mm}k+_n2)(k\hspace{2mm}k+_n1)\\
&&=(k+_n(n-1)\hspace{2mm}k+_n(n-2)\hspace{2mm}\cdots \hspace{2mm}k+_n2\hspace{2mm}k+_n1 \hspace{2mm}k)\\
&&=(n-1\hspace{2mm}n-2\hspace{2mm}\cdots\hspace{2mm}2\hspace{2mm}1 \hspace{2mm}0),
\end{eqnarray*}
the last equality coming from the fact that both of the last two cyclic permutations map $m$ to $m-_n1$ for any $m\in\mathbf{Z}_n$. Since the cycle $(n-1\hspace{2mm}n-2\hspace{2mm}\cdots\hspace{2mm}2\hspace{2mm}1 \hspace{2mm}0)$ is of order $n$, we see that
\begin{eqnarray*}
\mbox{\boldmath $D$}^{(n(n-1))}=\mbox{\boldmath $D$},
\end{eqnarray*}
and hence $\mbox{\boldmath $D$}\in Per(\mathbf{Def}_N^n)$. \\
(B) The assertion is a direct consequence of Proposition 3.2 which establishes the equivalence of the periodicity of $\mbox{\boldmath $A$}$ and that of $\Delta(\mbox{\boldmath $A$})$. This completes the proof.
\end{proof}

Thus we arrive at the main theorem of this paper.

\begin{thm}
A rhythm is smooth if and only if it is quasi-smooth. In particular, for any rhythm $\mbox{\boldmath $a$}\in \mathbf{R}_N^n$, there exists a nonnegative integer $\ell$ such that $p_2(Ref^{\ell}(\iota_0(\mbox{\boldmath $a$})))$ is smooth, where $p_2:m\mathbf{R}_N^n\rightarrow \mathbf{R}_N^n$ denotes the projection onto the second factor.
\end{thm}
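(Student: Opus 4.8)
The plan is to derive Theorem 4.2 from the already-established Theorem 4.1 together with the definitions connecting smoothness, quasi-smoothness, and the width criterion. The logical skeleton is short: I must show that for a rhythm $\mbox{\boldmath $a$}\in\mathbf{R}_N^n$, being \emph{smooth} (in the sense of the earlier reference via the map $Rav$, characterized numerically by having width $\leq 1$) is equivalent to being \emph{quasi-smooth} (Definition 1.6, i.e.\ some $\iota_k(\mbox{\boldmath $a$})$ lies in $Per(\mathbf{Ref}_N^n)$). The bridge in both directions is Theorem 4.1(B), which translates quasi-smoothness of $\mbox{\boldmath $A$}=\iota_k(\mbox{\boldmath $a$})$ into the two conditions (A.1) $w(\Delta(\mbox{\boldmath $A$}))\leq 1$ and (A.2) max-markedness of $\Delta(\mbox{\boldmath $A$})$. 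Note that $w(\Delta(\iota_k(\mbox{\boldmath $a$})))=w(d(\mbox{\boldmath $a$}))$ is independent of $k$, so condition (A.1) is a property of $\mbox{\boldmath $a$}$ alone, while (A.2) is a condition that can always be arranged by choosing $k$ to point at a maximal entry of $d(\mbox{\boldmath $a$})$.

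First I would make explicit the link between the externally-defined smoothness and the width. I would invoke the numerical criterion from (Hazama 2022)---which I am entitled to cite---stating that a rhythm $\mbox{\boldmath $a$}$ is smooth if and only if $w(d(\mbox{\boldmath $a$}))\leq 1$. This reduces the entire theorem to the assertion: $\mbox{\boldmath $a$}$ is quasi-smooth $\iff w(d(\mbox{\boldmath $a$}))\leq 1$. For the forward implication, if $\mbox{\boldmath $a$}$ is quasi-smooth then by definition some $\iota_k(\mbox{\boldmath $a$})\in Per(\mathbf{Ref}_N^n)$, so Theorem 4.1(B) forces (A.1), i.e.\ $w(d(\mbox{\boldmath $a$}))\leq 1$, hence $\mbox{\boldmath $a$}$ is smooth. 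For the reverse implication, suppose $w(d(\mbox{\boldmath $a$}))\leq 1$. I would choose $k$ to be an index at which the difference $d(\mbox{\boldmath $a$})$ attains its maximum, so that $\iota_k(\mbox{\boldmath $a$})$ is max-marked; then both (A.1) and (A.2) hold for $\Delta(\iota_k(\mbox{\boldmath $a$}))$, and Theorem 4.1(B) yields quasi-smoothness of $\mbox{\boldmath $a$}$. This establishes the equivalence.

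For the ``In particular'' clause I would argue constructively. Starting from $\iota_0(\mbox{\boldmath $a$})$, Proposition 3.2 says $\iota_0(\mbox{\boldmath $a$})\in Per(\mathbf{Ref}_N^n)$ iff $\Delta(\iota_0(\mbox{\boldmath $a$}))\in Per(\mathbf{Def}_N^n)$, so it suffices to track the orbit in $m\mathbf{D}_N^n$. Here the key engine is the monotonicity of $\mu_{m\mathbf{D}}$ (Proposition 3.3): along the $Def$-orbit the integer-valued measure $\mu_{m\mathbf{D}}$ is non-decreasing, and since $m\mathbf{D}_N^n$ is finite the measure is bounded above; therefore the orbit must eventually reach a $\mu_{m\mathbf{D}}$-invariant point, and in fact a $\mu_{m\mathbf{D}}$-stable point lying in a cycle. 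By Corollary 4.1 and Proposition 4.2 the content of the difference part stabilizes with width $\leq 1$, so some iterate $Def^{\ell}(\Delta(\iota_0(\mbox{\boldmath $a$})))$ satisfies (A.1), and I would then advance a few more steps (as in the proof of Proposition 4.2, reaching a max-marked configuration) to also secure (A.2). Pulling back through $\Delta$ via Proposition 1.1 and Proposition 3.2, the corresponding $Ref^{\ell}(\iota_0(\mbox{\boldmath $a$}))$ has difference part of width $\leq 1$, whence $p_2(Ref^{\ell}(\iota_0(\mbox{\boldmath $a$})))$ has difference width $\leq 1$ and is therefore smooth by the numerical criterion.

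The main obstacle I anticipate is not the equivalence---that is essentially a repackaging of Theorem 4.1---but making the ``In particular'' termination argument fully rigorous while being careful about one subtlety: monotonicity guarantees that $\mu_{m\mathbf{D}}$ eventually stops increasing, but stabilization of the measure at a single step is weaker than $\mu_{m\mathbf{D}}$-stability (which requires invariance under all iterates). I would need to argue that once the non-decreasing bounded sequence $\mu_{m\mathbf{D}}(Def^{\ell}(\cdot))$ becomes constant, the width drops to $\leq 1$; this follows because a strict increase in $\mu_{m\mathbf{D}}$ occurs precisely when the marked adjacent pair differs by more than $1$ (Proposition 3.3(3)), so constancy of the measure from some point on means every adjacent marked pair thereafter differs by at most $1$, and by cycling the marker through all positions (the content argument of Proposition 4.2) the global width becomes $\leq 1$. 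The second delicate point is confirming max-markedness of the terminal configuration, which I would obtain exactly as in Lemma 4.1 by advancing the marker until it lands on a maximal entry; this requires only finitely many further applications of $Def$ and hence of $Ref$.
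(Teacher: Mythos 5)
Your proof of the equivalence is exactly the paper's: both directions go through the width criterion of (Hazama 2022) and Theorem 4.1(B), with the marker placed at a maximal entry of $d(\mbox{\boldmath $a$})$ for the smooth $\Rightarrow$ quasi-smooth direction. Where you genuinely add something is the ``In particular'' clause: the paper's printed proof is silent on it, whereas you supply a termination argument --- the integer-valued measure $\mu_{m\mathbf{D}}$ is non-decreasing along the $Def$-orbit (Proposition 3.3) and bounded because the state space is finite, so it is eventually constant; the point at which constancy begins is then $\mu_{m\mathbf{D}}$-stable, so Proposition 4.2 gives width $\leq 1$, and pulling back through $\Delta\circ Ref^{\ell}=Def^{\ell}\circ\Delta$ (iterated Proposition 1.1) gives smoothness of the rhythm part. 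Your handling of the subtlety you flag is correct: constancy of the measure along the entire forward orbit is precisely $\mu_{m\mathbf{D}}$-stability of the point where constancy begins, so Proposition 4.2 applies verbatim and you do not actually need to re-run its content argument by hand. Two small remarks: (i) the step securing max-markedness (A.2) is superfluous for this clause, since smoothness of $p_2(Ref^{\ell}(\iota_0(\mbox{\boldmath $a$})))$ needs only the width bound (A.1), not quasi-smoothness of the iterate itself; (ii) a shorter route to the same clause is to note that in a finite dynamical system every orbit eventually enters a cycle, and any point of that cycle is quasi-smooth, hence has smooth rhythm part by the equivalence already established. Either way, your argument is correct and is in fact more complete than the paper's own proof, which proves the equivalence and leaves the ``In particular'' statement implicit.
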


\begin{proof}
Recall that, by the main theorem of (Hazama 2022), a rhythm $\mbox{\boldmath $a$}$ is smooth if and only if 
\begin{eqnarray}
w(d(\mbox{\boldmath $a$}))\leq 1. 
\end{eqnarray}
Therefore the if-part follows from Theorem 4.1 (B). As for the only-if part, assume that $\mbox{\boldmath $a$}$ is smooth. Let $\mbox{\boldmath $d$}=d(\mbox{\boldmath $a$})=(d_0,\cdots, d_{n-1})$, and choose an index $k\in\mathbf{Z}_n$ such that $d_k=\max(\mbox{\boldmath $d$})$. Then $\mbox{\boldmath $D$}=\iota_k(\mbox{\boldmath $d$})\in m\mathbf{D}_N^n$ is max-marked and satisfies the condition $w(\mbox{\boldmath $D$})\leq 1$ by (4.3). Therefore Theorem 4.1 (B) implies that $\iota_k(\mbox{\boldmath $a$})$ is quasi-smooth, and hence $\mbox{\boldmath $a$}$ is quasi-smooth.\\
\end{proof}

\newpage
\noindent
{\bf References}\\\\
Clough J, Douthett J, Krantz R (2000), Maximally Even Sets: A Discovery in Mathematical Music Theory is
Found to Apply in Physics. In: Reza S(ed) Bridges: Mathematical Connections in Art, Music, and Science, Conference Proceedings 2000. Central Plain Book Manufacturing, Kansas, pp 193-200. \\\\
Demaine ED, Gomez-Martin F, Meijer H, Rappaport D, Taslakian P, Toussaint  GT, Winograd T, Wood  DR (2009), The distance geometry of music. Computational Geometry 42: 429-454. \\\\
Hazama F  (2022) Iterative method of construction for smooth rhythms. Journal of Mathematics and Music 16:216-235.\\\\
Rota GC (2001) The adventures of measure theory. In Crapo H, Senato D(eds) Algebraic Combinatorics and Computer Science. Springer-Verlag Italia, pp 27-39.\\\\
Toussaint. G (2013) The Geometry of Musical Rhythm: What Makes a ``Good" Rhythm Good?  CRC Press, New York.\\\\

\end{document}